\theoremstyle{plain}
\newtheorem{thm}{Theorem}[section]
\newtheorem{lem}[thm]{Lemma}
\theoremstyle{definition}
\newtheorem{remark}[thm]{Remark}
\newtheorem{example}[thm]{Example}
\newcommand{\N}{\mathbb{N}} 
\newcommand{\dx}{\partial_x}
\newcommand{\dt}{\partial_t}
\newcommand{\ds}{\partial_s}
\def\E{\mathcal{E}}
\def\V{\mathcal{V}}
\def\L{\mathcal{L}}
\def\G{\mathcal{G}}
\def\H{\mathcal{H}}
\def\M{\mathcal{M}}
\newcommand{\out}{_{\text{out}}}
\newcommand{\In}{_{\text{in}}} 
\newcommand{\rhor}{\rho_{\text{ref}}}
\newcommand{\lmax}{\ell_{\text{max}}}
\newcommand{\lmin}{\ell_{\text{min}}}
\title[Synchronization of a nodal observer for the wave equation]{Identification of minimal number of measurements allowing synchronization of a nodal observer for the wave equation}  
\author{Jan Giesselmann, Teresa Kunkel$^{\star}$}
\address{Technical University of Darmstadt, Department of Mathematics\\ Dolivostraße 15, 64293 Darmstadt, Germany}
\date{September 30, 2024}
\email{giesselmann@mathematik.tu-darmstadt.de}
\email{tkunkel@mathematik.tu-darmstadt.de}
\thanks{$^\star$ Corresponding author}
\begin{document}
	\maketitle
	
	\begin{abstract}  
		We study a state estimation problem for a $2\times 2$ linear hyperbolic system on networks with eigenvalues with opposite signs.
		The system can be seen as a simplified model for gas flow through gas networks.
		For this system we construct an observer system based on nodal measurements and investigate the convergence of the state of the observer system towards the original system state.
		We assume that measurements are available at the boundary nodes of the network
		and identify the minimal number of additional measurements in the network that are needed to guarantee synchronization of the observer state towards the original system state.
		It turns out that for tree-shaped networks boundary measurements suffice to guarantee exponential synchronization, while for networks that contain
		cycles synchronization can be guaranteed if and only if at least one measurement point is added in each cycle.
		This is shown for a system without source term and for a system with linear friction term.	
	\end{abstract}

\begin{quote}	
	\noindent 
	{\small {\bf Keywords:} 
		data assimilation, observer systems, nodal measurements, hyperbolic systems, wave equation, networks }
\end{quote}

\begin{quote}
	\noindent
	{\small {\bf 2020 Mathematics Subject Classification:}
		35L04, 35L05, 93B53, 93C41  
	}
\end{quote}

\section{Introduction}
In this paper, we consider a state estimation problem for a wave equation on networks that is written as a $2\times 2$ linear hyperbolic system with eigenvalues with opposite signs.
Our goal is to reconstruct the system state in the network from nodal measurements. Therefore we combine the model of the system with the measurements.
This is called data assimilation or state estimation. Here, we use observer-based data assimilation, which means that similar as in \cite{Gugat2021}
we construct an observer system whose boundary terms depend on the measurements.
Then, we investigate the convergence of the state of the observer system towards the original system state in the long time limit.
We refer to this convergence as synchronization. Note that Li and Rao, see e.g. \cite{LiRao2019}, use a different notion of synchronization.
They mean by synchronization that different components of a system admit (approximately) the same state in the long time limit. 

We assume that measurements are available at the boundary nodes of the network, but only very few measurements are available at points within the pipes.
This is motivated by the fact that the wave equation can be seen as a simplified model for gas transport through gas networks and in this application measurement devices are placed at the boundary of the network, while the number of measurement devices
at points within the network is very low. Therefore we assume that boundary measurements are available and identify the minimal number of additional measurements in the network that are needed to reconstruct the system state.
For tree-shaped networks, i.e., for networks that do not contain a cycle, we show that boundary measurements suffice to guarantee exponential convergence of the observer state towards the original system state in the long time limit.
For networks that contain a cycle, we show that in general the boundary measurements do not lead to synchronization of the observer state towards the original system state. However, if we add one measurement point in every cycle of the network, we again get synchronization, see Remark~\ref{rem:synchr_not_tree_shaped}.

The synchronization of the state of the observer system towards the state of the original system can be viewed as controlling the difference between these two systems to zero. Therefore, in particular for linear systems, convergence of observers is related to control and stabilization problems.
In the following, we give a literature review about boundary control and boundary stabilization problems for one-dimensional hyperbolic systems.
We start by systems on a single interval and then summarize results for systems on network.
In \cite{Gugat2014_telegraph-equation} the exponential stability of the telegraph equation on a bounded interval with boundary feedback control at one end of the interval is shown
and in \cite{BastinCoron2011}
conditions for boundary feedback stabilizability of a $2\times2$ linear hyperbolic system with feedback control at at least one end of a bounded interval are given, namely the system is stabilizable if the solution of an associated ODE exists.
The exponential stability of an observer-controller system for a system of coupled linear hyperbolic equations on a single edge	
is shown in \cite{MeglioVazquezKrstic2013}, where one boundary control point and one boundary measurement point is used.
In \cite{DatkoLagnesePlois1986} it is shown that a time delay in a boundary feedback control for a one-dimensional wave equation can destabilize the system. This might be relevant, if one assumes that there is a 
time delay for the measurements that are included in the boundary condition of the observer system.

An overview of control and stabilization of one-dimensional wave equations on networks, describing e.g. networks of linear vibrating strings,  can be found in \cite{Zuazua2013_control_stabilization}, see also references therein. The analysis in \cite{Zuazua2013_control_stabilization} focuses on controls or observations that take place at a single boundary node. 
In \cite{lagnese_modeling_1994} it is shown that tree-shaped networks of elastic strings are exactly controllable, if all but one boundary nodes are controlled, while the same equation on a networks that contains a cycle is not exactly controllable, if certain rationality conditions are satisfied.
Limits of stabilizability for networks of vibrating strings with feedback control at at least one boundary node are studied in \cite{GugatGerster2019} for star-shaped networks and it is proven that, depending on the parameters in the source term, stabilization is not possible, if the length of one of the edges is too long or the number of edges in the star-shaped network is too large.  
Similarly, in \cite{GugatHuangWang2023} for the same system on a network with a cycle it is shown that the system is only stabilizable, if the length of the edges is sufficiently small.
In \cite{Leugering2023_Control_WaveEquations, AvdoninZhao2022} the exact controllability of a linear wave equation on networks using boundary and internal controls is shown. 
The exponential stabilizability of a generalized telegraph equation on a star-shaped network with dissipative boundary conditions at all boundary nodes is proven in \cite{HayekNicaise2020}
and the exact controllability and exponential stability of $2\times2$ hyperbolic systems on graphs with dissipative boundary conditions at all but one boundary nodes is shown in \cite{Nicaise2017}. 
Estimates of the decay rate for a wave equation on star-shaped networks with boundary feedback control at all but one boundary nodes are given in \cite{XieXu2016}.

Stability and controllability has  also been studied for quasilinear systems.
Boundary controllability to a desired steady state for $2\times 2$-quasilinear systems of hyperbolic conservation laws on a bounded interval with boundary control at both ends of the interval is shown in \cite{CoronNovelGeorges2007}.
In \cite{GugatWeiland2021} it is shown that the solution of a hyperbolic quasilinear $2\times2$-system on networks can be stabilized exponentially fast by applying a sufficient number of damping control actions. The analysis includes an example of a network with a cycle. 
In \cite{PerrollazRosier2014} the finite-time stabilization of $2\times2$-systems of hyperbolic conservation laws is shown 
for tree-shaped networks with feedback control at at least one node per edge.
Quasilinear wave equations on a tree-shaped network of strings are investigated in \cite{Gu_TatsienLi2011} and exact boundary observability is shown, if the state at all but one boundary node is observed.
Further results on boundary control and boundary stabilization of one-dimensional hyperbolic systems can be found in the review paper \cite{Hayat2021} 
and references therein.

The main result of this paper is that we identify the minimal number of measurements within the network that are needed in addition to boundary measurements in order to guarantee synchronization of the observer state towards the original system state in the long time limit.
For tree-shaped networks, we show that boundary measurements suffice to guarantee synchronization, while for networks that contain cycles exactly one additional measurement point in every cycle is needed to guarantee synchronization.

We start by introducing the model equations and the observer system in Section~\ref{sec:model_observer}.
Then, in Section~\ref{sec:obs_no_friction} we study the synchronization of the observer system for a system without source term.
First, for networks containing a cycle we show that, if only boundary measurements are available, then there exist examples of initial data, such that the difference system has a constant solution, i.e., synchronization does not hold.
Second, for tree-shaped networks, we show that the state of the observer system converges exponentially towards the original system state in the long time limit. The main ingredient of the proof of the exponential synchronization is an observability inequality for tree-shaped networks, which can be proven using a graph-theoretic reduction argument over the size of the network. The reason to first present the case without source term is that the arguments are less technical in this case and to highlight 
that the exponential synchronization does not result from friction, but only from the boundary observations.
In Section~\ref{sec:obs_linear_friction} we extend the investigation of the synchronization to a system with linear source term.

\section{Model equations and observer system} \label{sec:model_observer}
Now, we introduce the model equations and the notation for networks.
We model a network by a directed graph $G=(\V, \E)$
with edges $e\in\E$ and nodes $\nu\in\V$. We denote the set of boundary nodes by $\V_\partial\subset \V$, the length of the edge $e\in\E$ by $\ell^e$ and the set of all edges that are incident to a node $\nu\in\V$ by $\E(\nu)$.
Let $\lmax$ denote the maximal length of an edge in $G$ and $\lmin$ denote the minimal length of an edge in $G$.
Then, on each edge we consider 
the following $2\times 2$ linear hyperbolic system
\begin{align} \label{eq:sys_single_pipe}
	\begin{pmatrix}
		\dt R_+^e \\ \dt R_-^e
	\end{pmatrix}
	+\begin{pmatrix}
		c & 0\\ 0 &-c
	\end{pmatrix}
	\begin{pmatrix}
		\dx R_+^e \\ \dx R_-^e
	\end{pmatrix}  
	= \begin{pmatrix}
		-\lambda (R_+^e-R_-^e)\\ \lambda (R_+^e-R_-^e)
	\end{pmatrix},\qquad
	x\in[0,\ell^e],\quad  t>0,
\end{align}
where $\lambda\ge0$ is some given parameter and $c>0$ denotes the speed of sound.
The system can also be written as the wave equation
\begin{align}
	\partial_{tt} u^e - c^2 \partial_{xx} u^e  = - 2\lambda\partial_t u^e, \label{eq:wave_eq}
\end{align}
where $u^e$ is related to $R^e_\pm$ by
\begin{align*}
	R^e_+=c\dx u^e -\dt u^e,\quad R^e_-=c\dx u^e +\dt u^e.
\end{align*}
One possible set of coupling conditions for the wave equation \eqref{eq:wave_eq} are the delta-prime vertex conditions
\begin{align*}
	&\dx u^e(t,\nu)=\dx u^f(t,\nu) ,&& t\in(0,T),\ \nu\in\V\setminus\V_\partial,\ e,f\in\E(\nu), \\ 
	&\sum_{e\in\E(\nu)} s^e(\nu) u^e(t,\nu) =0,&& t\in(0,T),\ \nu\in\V\setminus\V_\partial, 
\end{align*}
see \cite{Leugering2023_Control_WaveEquations}.
Here, $s^e(\nu)$ denotes the direction of the edge $e$. If $e=(\nu_1, \nu_2)$ starts in $\nu_1$ and ends in $\nu_2$, then $s^e(\nu_1)=-1$ and $s^e(\nu_2)=1$.
These coupling conditions imply the following coupling conditions for $R_\pm^e$
\begin{align*}
	&R^e_+(t,\nu)+R^e_-(t,\nu)=R^f_+(t,\nu)+R^f_-(t,\nu) ,&& t\in(0,T),\ \nu\in\V\setminus\V_\partial,\ e,f\in\E(\nu),\\ 
	&\sum_{e\in\E(\nu)} \left( R^e\out(t,\nu)-R^e\In(t,\nu) \right)=0,&& t\in(0,T),\ \nu\in\V\setminus\V_\partial 
\end{align*}
or equivalently
\begin{align}  \label{eq:CC} 
	R^e\out(t,\nu)=-R^e\In(t,\nu)+\tfrac{2}{|\E(\nu)|} \sum_{g\in\E(\nu)} R^g\In(t,\nu),\quad t\in(0,T),\ \nu\in\V\setminus\V_\partial,\ e\in\E(\nu).
\end{align}
Here we denote by $R^e\out(t,\nu)$ the variable $R^e_+(t,0)$, if $\nu$ corresponds to $x=0$, and $R^e_-(t,\ell^ e)$, if $\nu$ corresponds to $x=\ell^e$ (i.e., $R^e\out(t,\nu)$ is  directed out of the node $\nu$). Analogously, we denote by $R^e\In(t,\nu)$ the variable $R^e_-(t,0)$, if $\nu$ corresponds to $x=0$, and $R^e_+(t,\ell^ e)$, if $\nu$ corresponds to $x=\ell^e$.
In the following, we will use the coupling conditions \eqref{eq:CC} and 
complement the system with the initial and boundary conditions
\begin{align}
	&R^e_\pm(0,x)=y_\pm^e(x),\quad x\in(0,\ell^e),\ e\in\E, \label{eq:IC}\\
	&R^e\out(t,\nu)=b^e(t),\quad t\in(0,T),\ \nu\in\V_\partial,\ e\in\E(\nu). \label{eq:BC}
\end{align}

The system \eqref{eq:sys_single_pipe} together with the initial, boundary and coupling conditions \eqref{eq:CC}--\eqref{eq:BC} can be seen as a simplified model for the flow of gas through gas networks. In this case, $R_\pm$ correspond to the Riemann invariants of the barotropic Euler equations as a model for gas flow through pipes and $R_\pm$ can be related to the velocity $v$ and the density $\rho$ of the gas by 
\begin{align*}
	R_\pm=\int_{\rhor}^{\rho} \tfrac{\sqrt{p'(r)}}{r} dr\pm v,
\end{align*}
where $\rhor$ denotes a reference density and $p(\rho)$ the pressure law.
Then, the coupling conditions~\eqref{eq:CC} correspond to conservation of mass and continuity of pressure at the pipe junctions. For more details see \cite{Gugat2021}.
Note that in contrast to \cite{Gugat2021}, we use here a simplified, linear friction term at the right hand side of \eqref{eq:sys_single_pipe}
since this allows an easier analysis of the synchronization of the observer system.  In addition, we focus here on boundary measurements and allow only few measurements points within the networks, while \cite{Gugat2021} assumes that measurement data are available at a sufficient number of inner nodes (roughly at every second node).

In the following, we refer to \eqref{eq:sys_single_pipe} together with \eqref{eq:CC}--\eqref{eq:BC} as the \textbf{original system} and assume that measurements of $R_\pm$ at the boundary nodes are available.
Then, we construct the \textbf{observer system}
\begin{align}
	&\begin{pmatrix}
		\dt S_+^e \\ \dt S_-^e
	\end{pmatrix}
	+\begin{pmatrix}
		c & 0\\ 0 &-c
	\end{pmatrix}
	\begin{pmatrix}
		\dx S_+^e \\ \dx S_-^e
	\end{pmatrix}  
	= \begin{pmatrix}
		-\lambda (S_+^e-S_-^e)\\ \lambda (S_+^e-S_-^e)
	\end{pmatrix},&& e\in\E, \label{eq:obs_sys}\\
	&S^e_\pm(0,x)=z_\pm^e(x),&& x\in(0,\ell^e),\  e\in\E, \label{eq:obs_IC}\\
	&S^e\out(t,\nu)=b^e(t),&& t\in(0,T),\  \nu\in\V_\partial,\  e\in\E(\nu), \label{eq:obs_BC}\\
	&S^e\out(t,\nu)=-S^e\In(t,\nu)+\tfrac{2}{|\E(\nu)|} \sum_{g\in\E(\nu)} S^g\In(t,\nu),&& t\in(0,T),\  \nu\in\V\setminus\V_\partial,\  e\in\E(\nu).  \label{eq:obs_CC}
\end{align}
Note that the observer system has different initial data than the original system, since we do not know the exact initial data, but the same boundary data, since we assume no measurement errors.
Due to the linearity of the system the \textbf{difference system} for $\delta:=R-S$ has the same structure as the original system  and is given by
\begin{align*}
	&\begin{pmatrix}
		\dt \delta_+^e \\ \dt \delta_-^e
	\end{pmatrix}
	+\begin{pmatrix}
		c & 0\\ 0 &-c
	\end{pmatrix}
	\begin{pmatrix}
		\dx \delta_+^e \\ \dx \delta_-^e
	\end{pmatrix}  
	= \begin{pmatrix}
		-\lambda (\delta_+^e-\delta_-^e)\\ \lambda (\delta_+^e-\delta_-^e)
	\end{pmatrix}, 
	&& e\in\E, \\ 
	&\delta^e_\pm(0,x)=y_\pm^e(x)-z_\pm^e(x)=:\tilde y_\pm^e(x),&& x\in(0,\ell^e),\  e\in\E, \\
	&\delta^e\out(t,\nu)=0,&& t\in(0,T),\  \nu\in\V_\partial,\  e\in\E(\nu), \\ 
	&\delta^e\out(t,\nu)=-\delta^e\In(t,\nu)+\tfrac{2}{|\E(\nu)|} \sum_{g\in\E(\nu)} \delta^g\In(t,\nu),&& t\in(0,T),\  \nu\in\V\setminus\V_\partial,\  e\in\E(\nu).  
\end{align*}
In particular, the boundary data of the difference system is zero.

\section{Synchronization of observer for system without friction} \label{sec:obs_no_friction}
First, we investigate the synchronization of the boundary observer for $\lambda=0$, i.e., for the case without friction. Then, the difference system has the form 
\begin{align}
	&\begin{pmatrix}
		\dt \delta_+^e \\ \dt \delta_-^e
	\end{pmatrix}
	+\begin{pmatrix}
		c & 0\\ 0 &-c
	\end{pmatrix}
	\begin{pmatrix}
		\dx \delta_+^e \\ \dx \delta_-^e
	\end{pmatrix}  
	= 0,  && e\in\E, \label{eq:system_diff}\\
	&\delta^e_\pm(0,x)=\tilde y_\pm^e(x),&& x\in(0,\ell^e), e\in\E, \\
	&\delta^e\out(t,\nu)=0,&& t\in(0,T), \nu\in\V_\partial, e\in\E(\nu), \label{eq:diff_BC} \\
	&\delta^e\out(t,\nu)=-\delta^e\In(t,\nu)+\tfrac{2}{|\E(\nu)|} \sum_{g\in\E(\nu)} \delta^g\In(t,\nu),&& t\in(0,T), \nu\in\V\setminus\V_\partial.  \label{eq:diff_CC}
\end{align}
Since we have no source term, the variables $\delta_\pm$ are constant along the characteristic curves $(s, \xi_\pm(s,x,t))=(s, x\pm c(s-t
))$, i.e., $\delta^e_+(t, \ell^e)=\delta^e_+(t-\tfrac{\ell^e}{c}, 0)$.
Existence of solutions of \eqref{eq:system_diff}--\eqref{eq:diff_CC} can be shown by tracing $\delta_\pm$ back along the characteristic curves.

\subsection{Example: No synchronization in a network with a cycle} \label{sec:example_no_synchr}
In this section we present an example that shows that in general it is not possible to guarantee synchronization of the observer system towards the original system, if the network contains a cycle and only boundary measurements are given.
We consider a network that contains a cycle, where the cycle does not contain a boundary node, see Fig.~\ref{fig:network_with_cycle}.

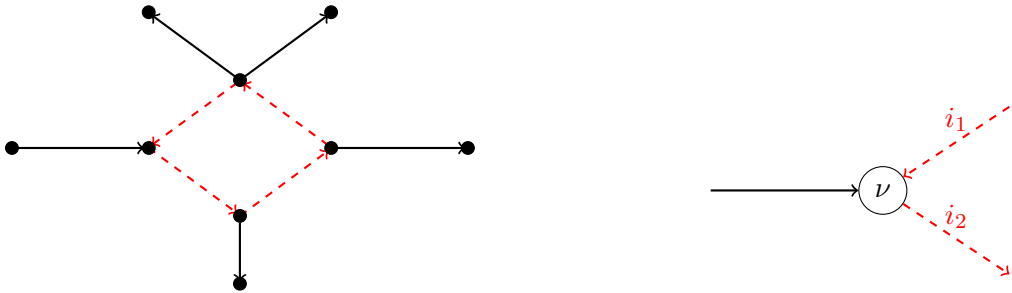
\begin{figure}[ht]
	\centering
	\begin{subfigure}[b]{0.49\textwidth}
		\centering
		\begin{tikzpicture}[scale=1.2]
			
			\draw[black, thick, ->] (3,0) coordinate (A)  -- (4.45,0); 
			\draw[black, thick, <-, red, dashed] (4.53,0.03)   -- (5.47,0.72);
			\draw[black, thick, ->, red, dashed] (4.5,0) coordinate (B)-- (5.47,-0.72);
			\draw[black, thick, ->] (5.5,0.75) coordinate (C)-- (6.47,1.47);
			\draw[black, thick, <-, red, dashed] (5.53,0.72) -- (6.47,0.03);
			\draw[black, thick, ->] (C) -- (4.53,1.47);
			\draw[black, thick, ->] (5.5,-0.75) coordinate (D)-- (5.5,-1.47);
			\draw[black, thick, ->, red, dashed] (D)-- (6.47,-0.03);
			\draw[black, thick, ->] (6.5,0) coordinate (E)-- (7.97,0);

			\filldraw [black] (A) circle (2pt);
			\filldraw [black] (B)  circle (2pt) ;
			\filldraw [black] (C) circle (2pt);
			\filldraw [black] (D) circle (2pt);
			\filldraw [black] (E) circle (2pt);
			\filldraw [black] (6.5, 1.5) circle (2pt);
			\filldraw [black] (4.5, 1.5) circle (2pt);
			\filldraw [black] (8.0, 0) circle (2pt);
			\filldraw [black] (5.5, -1.5) circle (2pt);
		\end{tikzpicture}
	\end{subfigure}
	\hfill
	\begin{subfigure}[b]{0.49\textwidth}
		\centering
		\begin{tikzpicture}[scale=1.2]
			
		\node (1) at (-2,0){};
		\node (2) at (0,0) [circle,draw]{$\nu$};
		\node (3) at (1.5,-1) {};
		\node (4) at (1.5,1){};
		
		\draw[->, thick] (1) to node[above] {} (2);
		\draw[->, thick, red, dashed] (2) to node[above] {$i_2$} (3);
		\draw[->, thick, red, dashed] (4) to node[above] {$i_1$} (2);
		\end{tikzpicture}
	\end{subfigure}
	
	\caption{Example of a network with a cycle (left) and a node within the cycle with incident edges (right). The edges contained in the cycle are drawn in red dashed. For simplicity we assume that all edges in the cycle are oriented in the same direction with respect to the cycle, e.g. counter-clockwise.
	}
	\label{fig:network_with_cycle}
\end{figure}

Denote the set of edges that are contained in the cycle by $\E_c$ and the set of edges that are not contained in the cycle by $\E_n$.
Now assume that at the initial time $t=0$ for all edges in the cycle we have the initial condition
\begin{align}
	\delta_+^e(0,x)=a, \quad \delta_-^e(0,x)=-a,\quad e\in\E_c ,\ x\in(0,\ell^e)  \label{eq:exl_no_synchr-IC1}
\end{align}
for some constant $a>0$ and for all edges that are not contained in the cycle we have 
\begin{align}
	\delta_\pm^e(0,x)=0, \quad e\in\E_n ,\ x\in(0,\ell^e). \label{eq:exl_no_synchr-IC2}
\end{align} 
This initial condition is compatible with the boundary conditions \eqref{eq:diff_BC} and the coupling conditions \eqref{eq:diff_CC}. To see this, let $\nu\in\V$ be a node that is contained in the cycle and denote by $i_1, i_2$ the edges that are incident to $\nu$ and are contained in the cycle (see Fig.~\ref{fig:network_with_cycle}). Then we have
\begin{align*}
	&-\delta^{i_2}\In(0,\nu)+\tfrac{2}{|\E(\nu)|} \sum_{g\in\E(\nu)} \delta^g\In(0,\nu)\\
	&=-\delta^{i_2}\In(0,\nu)+\tfrac{2}{|\E(\nu)|} \left(\delta^{i_1}\In(0,\nu) +\delta^{i_2}\In(0,\nu)\right)\\
	&= -(-a) + \tfrac{2}{|\E(\nu)|} (a+(-a)) = a =\delta^{i_2}\out(0,\nu).
\end{align*}
Therefore \eqref{eq:system_diff}--\eqref{eq:diff_CC} admits a constant solution 
\begin{align*}
	&\delta_+^e(t,x)=a, \quad \delta_-^e(t,x)=-a,\quad e\in\E_c,\ t>0,\ x\in(0,\ell^e)\\
	&\delta_\pm^e(t,x)=0, \quad e\in\E_n,\ t>0,\ x\in(0,\ell^e)
\end{align*} 
that does not decay over time, i.e., the solution of the  observer system does not converge towards the original solution.
In particular, we have on all pipes that are contained in the cycle that $\delta_+=a$ and $\delta_-=-a$, which corresponds to $R_+=S_++a$, $R_-=S_--a$. If \eqref{eq:system_diff}--\eqref{eq:diff_CC} is viewed as a model for gas transport, this corresponds to the fact that
the state of the original system and the state of the observer system have the same density, but different velocities.

\subsection{Observability inequality for tree-shaped networks}
As a first step towards the exponential synchronization of the observer state towards the original system state, we show a $L^2$-observability inequality for tree-shaped networks. 
The proof of the observability inequality uses a graph-theoretic reduction argument.

\begin{lem}  \label{lem:ObsInequ_L2}
	Let $G=(\V, \E)$ be a tree-shaped network with $N$ inner nodes. Let $\lmax$ denote the maximal length of a pipe in $G$. Then there exists a constant $C>0$ such that for $T\ge N \tfrac{\lmax}{c}$ and $t>T$ we have
	\begin{align} \label{eq:obsInequ_L2}
		\|(\delta_+, \delta_-)(t,\cdot)\|^2_{L^2(\E)}
	\le C \sum_{\nu\in\V_\partial}  \|(\delta_+, \delta_-)(\cdot, \nu)\|^2_{L^2([t-T, t+T])},
	\end{align}
	where
	\begin{align*}
		\|(\delta_+, \delta_-)(t,\cdot)\|^2_{L^2(\E)}
		:=\|\delta_+(t,\cdot)\|^2_{L^2(\E)} + \|\delta_-(t,\cdot)\|^2_{L^2(\E)}.
	\end{align*}
	The constant $C$ depends on the topology of the network and on $c$.
\end{lem}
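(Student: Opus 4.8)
The plan is to prove the observability inequality by induction on the number $N$ of inner nodes, reducing the tree one leaf edge at a time, using crucially that for $\lambda=0$ the Riemann invariants are transported unchanged along characteristics, so that the whole dynamics reduces to a time-shifted scattering process on the graph. First I would record two elementary facts. \emph{Transport:} on every edge $e$ the values $\delta_+^e,\delta_-^e$ are constant along the characteristics $x\mp c(s-t)=\mathrm{const}$, so the trace of $\delta$ at one end of $e$ equals the trace at the other end shifted in time by $\ell^e/c$; in particular the outgoing value $\delta^e\out(\cdot,w)$ at an interior endpoint $w$ equals the measured incoming value $\delta^e\In(\cdot,\nu)$ at the opposite boundary endpoint $\nu$, shifted by $\ell^e/c$, while the incoming value $\delta^e\In(\cdot,w)$ equals the homogeneous datum $\delta^e\out(\cdot,\nu)$ shifted by $\ell^e/c$ and hence vanishes for $t>\ell^e/c$. \emph{Conservative scattering:} at an inner node $\nu$ the coupling map $(\delta^g\In)_g\mapsto(\delta^g\out)_g$ is the matrix $M=-\mathrm{Id}+\tfrac{2}{|\E(\nu)|}\mathbf 1\mathbf 1^{\top}$, and a direct computation gives $M^2=\mathrm{Id}$; thus $M$ is an orthogonal involution and $\sum_g(\delta^g\out(t,\nu))^2=\sum_g(\delta^g\In(t,\nu))^2$ pointwise in $t$. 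This second fact is what keeps all constants under control, since every scattering step preserves the $\ell^2$-norm of the nodal data.

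For the induction, the base case $N=0$ is a disjoint union of single edges, on which both characteristic families are fed by the homogeneous boundary condition and therefore vanish identically for $t>\lmax/c$, so the left-hand side of \eqref{eq:obsInequ_L2} is zero and the estimate is trivial. For the inductive step I would pick a leaf edge $e=(\nu,w)$ with $\nu\in\V_\partial$ and interior endpoint $w$. By the transport fact the measurement $\|(\delta_+,\delta_-)(\cdot,\nu)\|_{L^2([t-T,t+T])}$ controls the full state on $e$ at time $t$ and reconstructs $\delta^e\out(\cdot,w)$ on a slightly shrunk time window, while $\delta^e\In(\cdot,w)=0$. Substituting $\delta^e\In(\cdot,w)=0$ into the coupling condition \eqref{eq:diff_CC} at $w$ expresses the surviving nodal relations solely through the deeper edges and the already-reconstructed quantity $\delta^e\out(\cdot,w)$, turning $G\setminus\{e\}$ (a tree with at most $N-1$ inner nodes once $w$ degenerates to a leaf) into a smaller instance of the same problem, in which the reconstructed data at $w$ plays the role of an effective boundary measurement bounded, via the scattering fact, by the measurement at $\nu$.

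Applying the induction hypothesis to $G\setminus\{e\}$ and adding back the contribution of $e$ then yields \eqref{eq:obsInequ_L2}. Since each reduction peels one edge and costs a time shift of at most $\lmax/c$, and reaching the boundary from the deepest edge of a tree with $N$ inner nodes requires at most $N$ such traversals, the required window length is $T\ge N\tfrac{\lmax}{c}$; the two-sided window $[t-T,t+T]$ appears because reconstructing the state at time $t$ uses boundary data both in the past (characteristics running toward the boundary) and in the future (outgoing nodal values read off downstream measurements shifted forward by $\ell^e/c$).

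The main obstacle I anticipate is precisely the junction bookkeeping in the inductive step. Deleting the leaf edge $e$ lowers the degree of $w$ from $d:=|\E(w)|$ to $d-1$, and the surviving relation $\delta^f\out(\cdot,w)=-\delta^f\In(\cdot,w)+\delta^e\out(\cdot,w)$ for the remaining edges $f$ does \emph{not} coincide with the standard condition \eqref{eq:diff_CC} for a degree-$(d-1)$ node, since the normalizing constant $\tfrac{2}{d}$ does not update to $\tfrac{2}{d-1}$. Hence the reduced problem is not literally of the original form, and the induction hypothesis must be stated for the slightly larger class of difference systems carrying inhomogeneous, externally prescribed nodal data at the newly created boundary vertices. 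Setting up this class so that the conservative-scattering identity still delivers norm control, and then tracking how $C$ accumulates as a product of uniformly bounded scattering maps while the admissible window shrinks by only $\lmax/c$ per step, is where the real work lies; the transport and scattering identities above are the routine ingredients.
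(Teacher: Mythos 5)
Your overall strategy (induction on the size of the tree, transport along characteristics, control of the junction by its scattering matrix) matches the paper's, but the inductive step as you propose it does not close, and the part you defer as ``where the real work lies'' is exactly the content of the paper's proof, not a routine afterthought. Concretely: peeling a \emph{single} leaf edge $e$ at its interior endpoint $w$ leaves (after substituting $\delta^e\In(\cdot,w)=0$) the relations $\delta^f\out(\cdot,w)=-\delta^f\In(\cdot,w)+\tfrac{2}{d}\sum_{g\neq e}\delta^g\In(\cdot,w)$ with $d=|\E(w)|$ unchanged. The resulting scattering matrix $M'=-\mathrm{Id}+\tfrac{2}{d}\mathbf 1\mathbf 1^\top$ now acts on $d-1$ edges and satisfies $(M')^2=\mathrm{Id}-\tfrac{4}{d^2}\mathbf 1\mathbf 1^\top\neq\mathrm{Id}$, so it is neither an involution nor orthogonal; the norm-conservation fact your constants rest on fails, and the reduced problem is not an instance of \eqref{eq:system_diff}--\eqref{eq:diff_CC}, so the induction hypothesis cannot be invoked for it. You acknowledge this and propose enlarging the class of systems in the hypothesis, but you never set up that class or prove the estimate for it, so the proof has a genuine hole at its center. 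A second, smaller inconsistency: your base case $N=0$ argues the state vanishes using the homogeneous boundary condition, yet at the artificial boundary vertices created by your reduction the data are inhomogeneous, so that base case is not the one your own induction needs.

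The paper avoids the obstacle rather than generalizing through it, by two choices you are missing. First, the reduction step removes \emph{all} boundary edges incident to a well-chosen inner node $\nu_1$ at once, where $\nu_1$ is a leaf of the auxiliary tree $\tilde G$ spanned by the inner nodes (so $\nu_1$ has exactly one incident inner edge). Then $\nu_1$ simply becomes a boundary node of the reduced network $G_1$, every surviving inner node keeps its full edge set, and the coupling conditions of $G_1$ are literally of the form \eqref{eq:diff_CC}: no modified junction conditions ever arise. Second, the inequality \eqref{eq:obsInequ_L2} is stated and proved with the full two-component traces $\|(\delta_+,\delta_-)(\cdot,\nu)\|_{L^2}$ at boundary nodes on the right-hand side, and the proof never uses the homogeneous condition \eqref{eq:diff_BC}; hence the induction hypothesis applies verbatim to the restriction of the solution to $G_1$ even though its trace at the new boundary node $\nu_1$ is nonzero — no enlarged class of systems is needed. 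The remaining work is then exactly what you sketched correctly: solve the coupling condition at $\nu_1$ for $\delta^{m+1}\In$ in terms of the boundary-edge traces, and transport those traces along characteristics to the original boundary nodes, which widens the time window by $\lmax/c$ per step and yields the threshold $T\ge N\tfrac{\lmax}{c}$.
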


\begin{proof}
	The main idea of the proof is to use an induction argument over the number of inner nodes of the network.
	In order to define the induction step, we first have to introduce some notation.
	In the following, we denote all edges that have a boundary node as incident node as `boundary edges' and all edges, where both incident nodes are inner nodes, by `inner edges'.
	By `boundary node of first order' we denote all all inner nodes of $G$ that are incident to a boundary edge.
	
	As a first step of the proof we show the following assertion: If $G$ is a tree-shaped network with $N\ge 2$ inner nodes, then there exists an inner node $\nu_1$ of $G$ such that $\nu_1$ is a boundary node of first order of $G$ and exactly one of the incident edges of $\nu_1$ is an inner edge, while all other incident edges of $\nu_1$ are boundary edges (see Fig.~\ref{fig:network2_tree-shaped_reduction}). The existence of such a node $\nu_1$ will be the basis of the reduction step used for the induction.

		\begin{figure}[ht]
		\centering
		\begin{tikzpicture}[scale=1.2]
			
			\draw[gray, thick, ->, dashed] (2,-0.75) -- (2.97,-0.03);
			\draw[gray, thick, ->, dashed] (2,0.75) -- (2.97,0.03);
			\draw[black, thick, ->] (4.5,0) -- (5.47,0.72);
			\draw[black, thick, ->] (3,0) node[above right] {$\nu_1$} -- (4.45,0);
			\draw[black, thick, ->] (4.5,0) -- (5.47,-0.72);
			\draw[black, thick, ->] (5.5,0.75) -- (6.47,1.47);
			\draw[black, thick, ->] (5.5,0.75) -- (6.47,0.23);
			\draw[black, thick, ->] (5.5,0.75) -- (6.47,-0.47);
			\draw[black, thick, ->] (5.5,-0.75) -- (6.47,-1.47);

			\filldraw [black] (3,0) circle (2pt);
			\filldraw [black] (4.5,0) circle (2pt);
			\filldraw [gray] (2,0.75) circle (2pt);
			\filldraw [gray] (2,-0.75) circle (2pt);
			\filldraw [black] (5.5,0.75) circle (2pt);
			\filldraw [black] (5.5, -0.75) circle (2pt);
			\filldraw [black] (6.5, 1.5) circle (2pt);
			\filldraw [black] (6.5, 0.2) circle (2pt);
			\filldraw [black] (6.5, -0.5) circle (2pt);
			\filldraw [black] (6.5, -1.5) circle (2pt);
		\end{tikzpicture}
		\caption{Illustration of the reduction step and of the definition of the node~$\nu_1$. The network $G$ consists of all (dashed and solid) edges, while the reduced network $G_1$ only consists of the solid edges.}
		\label{fig:network2_tree-shaped_reduction}
	\end{figure}
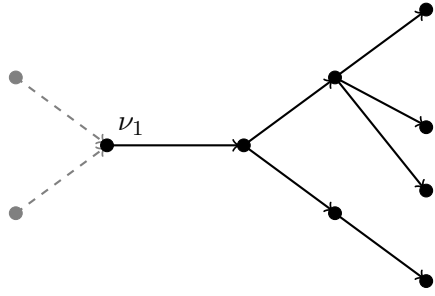

	We now show that we can always find a node $\nu_1$ with the above mentioned properties as long as the current network has at least two inner nodes. This can be reduced to a question from graph theory:
	Consider the subnetwork $\tilde G$ of $G$ that is constructed by removing all boundary edges of $G$, i.e., the nodes of $\tilde G$ are all inner nodes of $G$ (see Fig. \ref{fig:network3_tree-shaped_tildeG}).
	Since $G$ has at least two inner nodes and $G$ is a tree, the subnetwork $\tilde G$ is a tree with at least two nodes. A standard theorem from graph theory
	(see e.g.  \cite[Theorem 3.4]{saoub2021_introduction_graph_theory})
	states that every tree with at least two nodes has at least one node of degree one, i.e., at least one node that has exactly one incident edge. 
	Therefore $\tilde G$ 
	has at least one node $\tilde \nu$ that has exactly one incident edge in $\tilde G$. 
	Since $\tilde \nu$ is an inner node of $G$, $\tilde \nu$ has at least two incident edges in $G$. If none of these edges was a boundary edge of $G$, then all of these edges would be in $\tilde G$, 
	which would be a contradiction to the fact that $\tilde \nu$ is a node of order 1 in $\tilde G$.
	Hence, at least one of these edges is a boundary edge of $G$, i.e., $\tilde \nu$ is a boundary node of first order of $G$. Since $\tilde \nu$ has degree one in $\tilde G$, $\tilde \nu$ has exactly one incident inner edge in $G$. In particular, we can define $\nu_1:=\tilde \nu$ and $\nu_1$ has the required properties.
	A similar reduction argument (but for a different system) is used in \cite[Chapter II.5]{lagnese_modeling_1994}.

		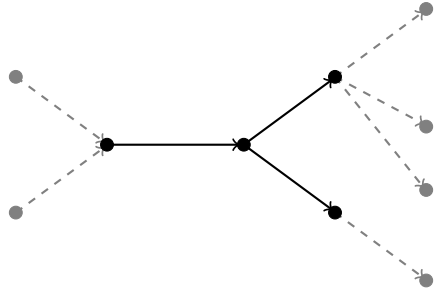
\begin{figure}[ht]
		\centering
		\begin{tikzpicture}[scale=1.2]
			
			\draw[gray, thick, ->, dashed] (2,-0.75) -- (2.97,-0.03);
			\draw[gray, thick, ->, dashed] (2,0.75) -- (2.97,0.03);
			\draw[black, thick, ->] (4.5,0) -- (5.47,0.72);
			\draw[black, thick, ->] (3,0)  -- (4.45,0);
			\draw[black, thick, ->] (4.5,0) -- (5.47,-0.72);
			\draw[gray, thick, ->, dashed] (5.5,0.75) -- (6.47,1.47);
			\draw[gray, thick, ->, dashed] (5.5,0.75) -- (6.47,0.23);
			\draw[gray, thick, ->, dashed] (5.5,0.75) -- (6.47,-0.47);
			\draw[gray, thick, ->, dashed] (5.5,-0.75) -- (6.47,-1.47);

			\filldraw [black] (3,0) circle (2pt);
			\filldraw [black] (4.5,0) circle (2pt);
			\filldraw [gray] (2,0.75) circle (2pt);
			\filldraw [gray] (2,-0.75) circle (2pt);
			\filldraw [black] (5.5,0.75) circle (2pt);
			\filldraw [black] (5.5, -0.75) circle (2pt);
			\filldraw [gray] (6.5, 1.5) circle (2pt);
			\filldraw [gray] (6.5, 0.2) circle (2pt);
			\filldraw [gray] (6.5, -0.5) circle (2pt);
			\filldraw [gray] (6.5, -1.5) circle (2pt);
		\end{tikzpicture}
		\caption{Construction of the auxiliary network $\tilde G$ (solid lines) out of the network $G$ (dashed and solid lines).}
		\label{fig:network3_tree-shaped_tildeG}
	\end{figure}
	
	Now, we define the following reduction step:
	Assume that the observability inequality \eqref{eq:obsInequ_L2} holds for all tree-shaped networks with at most $N-1$ inner nodes (we assume here $N\ge2$, otherwise we can directly apply the base case $N=1$, see the end of this proof). By the first part of the proof there exists an inner node $\nu_1$ of $G$ that is a boundary node of first order of $G$ and that has exactly one incident inner edge.
	Now, we consider the subnetwork $G_1$ of $G$ that is constructed by removing all incident boundary edges of $\nu_1$ (cf. Fig. \ref{fig:network2_tree-shaped_reduction}).
	Then, $G_1$ is a tree-shaped network with $N-1$ inner nodes and by assumption there exists a constant $C_1>0$ such that
	\begin{align} \label{eq:ObsInequ_L2_G1}
		\|(\delta_+, \delta_-)(t,\cdot)\|^2_{L^2(\E(G_1))}
		\le C_1 \sum_{\nu\in\V_\partial(G_1)}  \|(\delta_+, \delta_-)(\cdot, \nu)\|^2_{L^2([t-(N-1) \tfrac{\lmax}{c},\, t+(N-1) \tfrac{\lmax}{c}])}.
	\end{align}
	Our goal is to induce \eqref{eq:obsInequ_L2} from \eqref{eq:ObsInequ_L2_G1}.
	Since $\V_\partial(G_1)\subset\{\nu_1\}\cup \V_\partial(G)$, as a first step we have to estimate 
	$ \|(\delta_+^e, \delta_-^e)(\cdot, \nu_1)\|^2_{L^2([t-(N-1) \tfrac{\lmax}{c}, \, t+(N-1) \tfrac{\lmax}{c}])} $ by the right hand side of \eqref{eq:obsInequ_L2}, where $e$ is the inner edge that is incident to $\nu_1$.
	Denote the boundary edges that are incident to $\nu_1$ in $G$ by $e=1,\ldots, m$
	and the inner edge that is incident to $\nu_1$ by $e=m+1$, see Fig.~\ref{fig:network_coupling_condition}.
	The coupling conditions \eqref{eq:diff_CC} imply
	\begin{align*}
		\delta^{m+1}\out(s,\nu_1)&=(-1+\tfrac{2}{|\E(\nu_1)|})\, \delta^{m+1}\In(s,\nu_1)+\tfrac{2}{|\E(\nu_1)|} \sum_{i=1}^{m} \delta^i\In(s,\nu_1), \\
		\delta^{m+1}\In(s,\nu_1)&=\tfrac{|\E(\nu_1)|}{2} \left(  \delta^j\out(s,\nu_1)+\delta^j\In(s,\nu_1) \right)
		- \sum_{i=1}^{m} \delta^i\In(s,\nu_1) \quad \forall j=1,\ldots, m
	\end{align*}
	with $|\E(\nu_1)|=m+1$. Now, we can estimate
	\begin{align*}
		&|\delta^{m+1}\out(s,\nu_1)|^2 + |\delta^{m+1}\In(s,\nu_1)|^2\\
		&\le 2\, |\delta^{m+1}\In(s,\nu_1)|^2 + 2\, \big| \sum_{i=1}^{m} \delta^i\In(s,\nu_1) \big|^2
			+ |\delta^{m+1}\In(s,\nu_1)|^2\\
		&\le  3 \left( \tfrac{(m+1)^2}{2} |\delta^j\out(s,\nu_1)+\delta^j\In(s,\nu_1)|^2 + 2  |\sum_{i=1}^{m} \delta^i\In(s,\nu_1) |^2\right)     
			+ 2\, \big| \sum_{i=1}^{m} \delta^i\In(s,\nu_1) \big|^2\\
		&\le \tfrac{3}{2} (m+1)^2 |\delta^j\out(s,\nu_1)+\delta^j\In(s,\nu_1)|^2 
			+ 8 m \sum_{i=1}^{m} |\delta^i\In(s,\nu_1)|^2\\
	\end{align*}
	with an arbitrary $j\in\{1,\ldots,m\}$, where we have used Jensen's inequality in the last step.
	If $M$ is the maximal number of edges that are incident to any node in $G$, then 
	\begin{align} \label{eq:proofL2-obs-ineq_1}
		|\delta^{m+1}\out(s,\nu_1)|^2 + |\delta^{m+1}\In(s,\nu_1)|^2
		\le \left(\tfrac{3}{2} M^2 +8 (M-1)\right) \sum_{i=1}^{m} \left(|\delta^i\In(s,\nu_1)|^2 +|\delta^i\out(s,\nu_1)|^2 \right).
	\end{align}

	\begin{figure}[ht]
		\centering
		\begin{tikzpicture}[scale=1.2]
			
			\draw[black, thick, ->] (2.3,-0.85) coordinate (A) node[left] {$\nu_{\partial_m}$}--  node[below right] {\small $m$} (2.97,-0.03);
			\draw[black, thick, ->] (2,0.75) node[left] {$\nu_{\partial_1}$} -- node[above] {\small $1$} (2.97,0.03);
			\draw[black, thick, ->] (1.7,0) node[left] {$\nu_{\partial_2}$} -- node[below] {\small $2$} (2.97,0.0);
			\draw[black, thick, dashed] (4.5,0) -- (5.47,0.72);
			\draw[black, thick, ->] (3,0) node[above right]{$\nu_1$}  -- node[below]{\small $m+1$} (4.45,0);
			\draw[black, thick, dashed] (4.5,0) -- (5.47,-0.72);

			\filldraw [black] (3,0) circle (2pt);
			\filldraw [black] (4.5,0) circle (2pt);
			\filldraw [black] (2,0.75) circle (2pt);
			\filldraw [black] (A) circle (2pt);
			\filldraw [black] (1.7,0) circle (2pt);
		\end{tikzpicture}
		\caption{Illustration of the notation for the coupling condition at the node $\nu_1$.}
		\label{fig:network_coupling_condition}
	\end{figure}
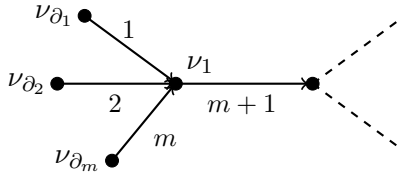

	Let $\nu_{\partial_j}$ denote the boundary node that is incident to the edge $j$, $j\in\{1,\ldots,m\}$, in $G$. Then, since the value of $\delta_\pm$ is constant along the characteristic curves, we have
	\begin{align*}
		\delta^j\out(s,\nu_1) = \delta^j\In(s+\tfrac{\ell^j}{c},\nu_{\partial_j}),\quad
		\delta^j\In(s,\nu_1) = \delta^j\out(s-\tfrac{\ell^j}{c},\nu_{\partial_j}).
	\end{align*}
	This implies
	\begin{align*}
		\int_{t-(N-1)\tfrac{\lmax}{c}}^{t+(N-1)\tfrac{\lmax}{c}} |\delta^j\out(s,\nu_1)|^2 ds
		=\int_{t-(N-1)\tfrac{\lmax}{c}+\tfrac{\ell^j}{c}}^{t+(N-1)\tfrac{\lmax}{c}+\tfrac{\ell^j}{c}} |\delta^j\In(s,\nu_{\partial_j})|^2 ds
		\le \int_{t-N\tfrac{\lmax}{c}}^{t+N\tfrac{\lmax}{c}} |\delta^j\In(s,\nu_{\partial_j})|^2 ds
	\end{align*}
	and analogously
	\begin{align*}
		\int_{t-(N-1)\tfrac{\lmax}{c}}^{t+(N-1)\tfrac{\lmax}{c}} |\delta^j\In(s,\nu_1)|^2 ds
		\le \int_{t-N\tfrac{\lmax}{c}}^{t+N\tfrac{\lmax}{c}} |\delta^j\out(s,\nu_{\partial_j})|^2 ds
	\end{align*}
	for $j\in\{1,\ldots,m\}$.
	Combining this with \eqref{eq:proofL2-obs-ineq_1} yields
	\begin{align*}
		&\|(\delta_+^{m+1}, \delta_-^{m+1})(\cdot, \nu_1)\|^2_{L^2([t-(N-1) \tfrac{\lmax}{c}, \, t+(N-1) \tfrac{\lmax}{c}])}\\
		&\le \int_{t+(N-1)\tfrac{\lmax}{c}}^{t-(N-1)\tfrac{\lmax}{c}} \left(\tfrac{3}{2} M^2 +8 (M-1)\right) \sum_{i=1}^{m} \left(|\delta^i\In(s,\nu_1)|^2 +|\delta^i\out(s,\nu_1)|^2 \right) ds\\
		&\le \left(\tfrac{3}{2} M^2 +8 (M-1)\right) \int_{t-N\tfrac{\lmax}{c}}^{t+N\tfrac{\lmax}{c}}\sum_{i=1}^{m} \left(|\delta^i\In(s,\nu_{\partial_i})|^2 +|\delta^i\out(s,\nu_{\partial_i})|^2 \right) ds.
	\end{align*}
	This concludes the first step of the proof of \eqref{eq:obsInequ_L2} using \eqref{eq:ObsInequ_L2_G1}.
	
	As a second step, we estimate $\|(\delta_+^j, \delta_-^j)(t,\cdot)\|^2_{L^2(0,\ell^j)}$ for all removed boundary edges\linebreak ${j=1,\ldots,m}$. Again, using the characteristic curves, we can compute
	\begin{align} \label{eq:proofL2-obs-ineq_2}
		\int_0^{\ell^j} |\delta\out^j(t,x)|^2 dx
		= \int_0^{\ell^j} |\delta^j\In(t+\tfrac{|\nu_{\partial_j}-x|}{c},\nu_{\partial_j})|^2 dx
		= \int_t^{t+\tfrac{\ell^j}{c}} |\delta^j\In(s,\nu_{\partial_j})|^2 c\, ds.
	\end{align}
	This implies
	\begin{align*}
		\|(\delta_+^j, \delta_-^j)(t,\cdot)\|^2_{L^2(0,\ell^j)}
		\le c\, \|(\delta_+^j, \delta_-^j)(\cdot, \nu_{\partial_j})\|^2_{L^2([t- \tfrac{\lmax}{c}, \, t+ \tfrac{\lmax}{c}])}
	\end{align*}
	for $j\in\{1,\ldots,m\}$.
	
	In summary, we have shown that the induction hypothesis \eqref{eq:ObsInequ_L2_G1} implies
	\begin{align*}
		&\|(\delta_+, \delta_-)(t,\cdot)\|^2_{L^2(\E(G))}\\
		&\le c\, \sum_{i=1}^{m} \|(\delta_+^j, \delta_-^j)(\cdot, \nu_{\partial_j})\|^2_{L^2([t- \tfrac{\lmax}{c}, \, t+ \tfrac{\lmax}{c}])}\\
		&\quad +C_1 ( \tfrac{3}{2} M^2 +8 (M-1) ) \sum_{\nu\in\V_\partial(G)}  \|(\delta_+, \delta_-)(\cdot, \nu)\|^2_{L^2([t-N \tfrac{\lmax}{c}, \, t+N \tfrac{\lmax}{c}])}\\
		&\le C \sum_{\nu\in\V_\partial(G)}  \|(\delta_+, \delta_-)(\cdot, \nu)\|^2_{L^2([t-N \tfrac{\lmax}{c}, \, t+N \tfrac{\lmax}{c}])},
	\end{align*}
	i.e, we have shown that \eqref{eq:ObsInequ_L2_G1} implies \eqref{eq:obsInequ_L2}.
	
	It remains to show the base case of the induction. Consider a tree-shaped network $G_{N-1}$ with exactly one inner node. Then this network is a star-shaped network, cf. Fig.~\ref{fig:star-shaped_network}. Denote the inner node by $\tilde \nu$, the edges by $e=1,\ldots, \tilde m$ and the boundary nodes by $\nu_{\partial_j}$, $j=1,\ldots, \tilde m$.
	As in \eqref{eq:proofL2-obs-ineq_2} we can show
	\begin{align*}
		&\|(\delta_+, \delta_-)(t,\cdot)\|^2_{L^2(\E(G_{N-1}))}
		\le c \sum_{i=1}^{\tilde m} \int_{t-\tfrac{\lmax}{c}}^{t+\tfrac{\lmax}{c}}
		\left(|\delta^i\In(s,\nu_{\partial_i})|^2 +|\delta^i\out(s,\nu_{\partial_i})|^2 \right) ds \\
		&	= c \sum_{\nu\in\V_\partial(G_{N-1})}  \|(\delta_+, \delta_-)(\cdot, \nu)\|^2_{L^2([t- \tfrac{\lmax}{c}, \, t+\tfrac{\lmax}{c}])}.
	\end{align*}
	Together, this shows the assertion of the lemma.
\end{proof}

\begin{figure}[ht]
	\centering
	\begin{tikzpicture}[scale=1.2]
		
		\draw[black, thick] (2,-0.75) -- (2.97,-0.03);
		\draw[black, thick] (2,0.75) -- (2.97,0.03);
		\draw[black, thick] (3,0) node[above] {$\tilde \nu$} --  (4.4,0.5) coordinate (A);
		\draw[black, thick] (3,0)  --  (4.4,-0.5) coordinate (B);

		\filldraw [black] (3,0) circle (2pt) ;
		\filldraw [black] (A) circle (2pt) node[right] {$\nu_{\partial_3}$};
		\filldraw [black] (B) circle (2pt) node[right] {$\nu_{\partial_4}$};
		\filldraw [black] (2,0.75) circle (2pt) node[left] {$\nu_{\partial_1}$};
		\filldraw [black] (2,-0.75) circle (2pt) node[left] {$\nu_{\partial_2}$};
	\end{tikzpicture}
	\caption{Example of a star-shaped network. }
	\label{fig:star-shaped_network}
\end{figure}
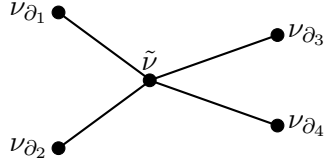

\subsection{Exponential synchronization}
Based on the $L^2$-observability inequality stated in \linebreak
Lemma~\ref{lem:ObsInequ_L2},
we can show that the state of the observer system converges exponentially towards the original solution for tree-shaped networks. The proof uses the same strategy as the proof of \cite[Theorem 4]{Gugat2021}.

\begin{thm}  \label{thm:expSynchr_L2}
	Let $G=(\V, \E)$ be a tree-shaped network. Then there exist constants $\mu>0$, $C_1>0$ depending on $c$, the topology of the network, the length of the pipes and the size of the initial data
	such that 
	\begin{align*}
		\|(\delta_+, \delta_-)(t,\cdot)\|^2_{L^2(\E)}
		\le C_1 \exp(-\mu t) \quad \forall t>0.
	\end{align*}
\end{thm}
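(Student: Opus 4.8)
The plan is to combine the observability inequality of Lemma~\ref{lem:ObsInequ_L2} with an energy dissipation estimate for the difference system, and then to turn the resulting one-step contraction into exponential decay by iteration. Throughout I write $E(t):=\|(\delta_+,\delta_-)(t,\cdot)\|^2_{L^2(\E)}$ for the natural energy, so that the assertion is $E(t)\le C_1\exp(-\mu t)$.

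First I would establish that $E$ is non-increasing and identify its dissipation rate. Since $\lambda=0$, on each edge $\dt\delta_+^e=-c\,\dx\delta_+^e$ and $\dt\delta_-^e=c\,\dx\delta_-^e$, so multiplying by $\delta_\pm^e$ and integrating by parts over $[0,\ell^e]$ gives
\begin{align*}
	\frac{d}{dt}E(t)=c\sum_{\nu\in\V}\sum_{e\in\E(\nu)}\Big(|\delta^e\out(t,\nu)|^2-|\delta^e\In(t,\nu)|^2\Big).
\end{align*}
The crucial point is that the coupling conditions \eqref{eq:diff_CC} are energy preserving: writing $n=|\E(\nu)|$, $a_e=\delta^e\In$ and $b_e=\delta^e\out=-a_e+\tfrac{2}{n}\sum_g a_g$, a direct expansion shows $\sum_{e\in\E(\nu)}|b_e|^2=\sum_{e\in\E(\nu)}|a_e|^2$, so the contribution of every inner node vanishes. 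At the boundary nodes the boundary condition \eqref{eq:diff_BC} forces $\delta^e\out=0$, whence
\begin{align*}
	\frac{d}{dt}E(t)=-c\sum_{\nu\in\V_\partial}\sum_{e\in\E(\nu)}|\delta^e\In(t,\nu)|^2\le0.
\end{align*}
Since the solution is only transported along characteristics and need not be classical, this identity should be justified either by first taking smooth, compatible initial data and passing to the limit, or directly from the characteristic representation; I expect this regularity bookkeeping to be the only genuinely delicate point.

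Integrating the dissipation identity over $[t-T,t+T]$, and using again that $\delta^e\out=0$ at boundary nodes (so that the nodal norm there reduces to $|\delta^e\In(\cdot,\nu)|^2$), identifies the right-hand side of the observability inequality \eqref{eq:obsInequ_L2} with $\tfrac1c\big(E(t-T)-E(t+T)\big)$. Thus, for $T\ge N\lmax/c$ and $t>T$, Lemma~\ref{lem:ObsInequ_L2} yields
\begin{align*}
	E(t)\le \frac{C}{c}\big(E(t-T)-E(t+T)\big).
\end{align*}
Because $E$ is non-increasing we have $E(t+T)\le E(t)$; substituting this into the previous line gives $E(t+T)\le\gamma\,E(t-T)$ with $\gamma:=\tfrac{C}{C+c}\in(0,1)$.

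Finally I would iterate this contraction. Setting $s=t-T$, the bound reads $E(s+2T)\le\gamma E(s)$ for all $s\ge0$, so $E(2kT)\le\gamma^k E(0)$ for every $k\in\N$; combined with monotonicity of $E$ this upgrades to $E(t)\le \gamma^{-1}\exp(-\mu t)\,E(0)$ with $\mu:=\tfrac{\ln(1/\gamma)}{2T}>0$, which is the claim with $C_1=\gamma^{-1}\|(\tilde y_+,\tilde y_-)\|_{L^2(\E)}^2$. The constants depend on $c$, on the topology through $C$ and $N$, on the pipe lengths through $T$, and on the initial data through $E(0)$, exactly as stated.
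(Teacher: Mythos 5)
Your proof is correct and follows essentially the same route as the paper's: an energy balance showing the nodal couplings are conservative and the boundary terms dissipative, combined with the observability inequality of Lemma~\ref{lem:ObsInequ_L2} to obtain the contraction $E(t+T)\le\gamma E(t-T)$ with $\gamma=\tfrac{C}{C+c}<1$, followed by iteration. The only cosmetic differences are that the paper integrates the energy identity over $[t-\tilde t,t+\tilde t]$ rather than differentiating $E$ pointwise in time (thereby sidestepping the regularity issue you flag), and it cites \cite[Lemma 2]{Gugat2021} for the nodal energy conservation that you verify by direct expansion.
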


\begin{proof}
	Let some $t>0$ be given. As in the proof of \cite[Theorem 4]{Gugat2021}, we multiply \eqref{eq:system_diff} by $\delta_\pm^e$ and integrate over $[t-\tilde t, t+\tilde t]\times[0,\ell^e]$ for some $\tilde t\in(0,t)$.
	This yields
	\begin{align*}
		\int_{t-\tilde t}^{t+\tilde t} \int_{0}^{\ell^e} \delta_\pm^e(s,x) \dt \delta_\pm^e(s,x)\, dx\, ds
		=(\mp c) \int_{t-\tilde t}^{t+\tilde t} \int_{0}^{\ell^e} \delta_\pm^e(s,x) \dx \delta_\pm^e(s,x)\, dx\, ds,
	\end{align*}
	i.e.,
	\begin{align*}
		\tfrac{1}{2}  \int_{0}^{\ell^e} \left[  | \delta_\pm^e(s,x)|^2 \right]_{s=t-\tilde t}^{t+\tilde t} \, dx
		= (\mp \tfrac{c}{2})  \int_{t-\tilde t}^{t+\tilde t} \left[  | \delta_\pm^e(s,x)|^2 \right]_{x=0}^{\ell^e} \, ds.
	\end{align*}
	Summing up over all pipes $e\in\E$, we get the following equality for the $L^2$-norm
	\begin{align*}
		&\|(\delta_+, \delta_-)(t+\tilde t,\cdot)\|^2_{L^2(\E)} - \|(\delta_+, \delta_-)(t-\tilde t,\cdot)\|^2_{L^2(\E)}\\
		&= (-c) \sum_{e\in\E} \int_{t-\tilde t}^{t+\tilde t} \left[  | \delta_+^e(s,x)|^2  -| \delta_-^e(s,x)|^2 \right]_{x=0}^{\ell^e} \, ds\\
		&= c  \int_{t-\tilde t}^{t+\tilde t} \sum_{\nu\in\V} \sum_{e\in\E(\nu)}   \big( |\delta\out^e(s,\nu)|^2  -| \delta\In^e(s,\nu)|^2 \big)\, ds.
	\end{align*}
	For all inner nodes $\nu$ of the network, \cite[Lemma 2]{Gugat2021} implies that 
	\begin{align*}
		\sum_{e\in\E(\nu)}   \big( |\delta\out^e(s,x)|^2  -| \delta\In^e(s,x)|^2 \big)=0
	\end{align*}
	 (note that in the notation of \cite[Lemma 2]{Gugat2021} we have $\mu^\nu=1$ at all inner nodes).
	 For all boundary nodes $\nu\in\V_\partial$, the boundary condition \eqref{eq:diff_BC} implies $|\delta\out^e(s,\nu)|^2 =0$. Together this shows
	 \begin{align*}
	 	&\|(\delta_+, \delta_-)(t+\tilde t,\cdot)\|^2_{L^2(\E)} - \|(\delta_+, \delta_-)(t-\tilde t,\cdot)\|^2_{L^2(\E)}\\
	 	&\le (-c)  \int_{t-\tilde t}^{t+\tilde t} \sum_{\nu\in\V_\partial} \sum_{e\in\E(\nu)}   \big( |\delta\out^e(s,\nu)|^2  +| \delta\In^e(s,\nu)|^2 \big)\, ds\\
	 	&= (-c) \sum_{\nu\in\V_\partial}  \|(\delta_+, \delta_-)(\cdot, \nu)\|^2_{L^2([t-\tilde t, t+\tilde t])}.
	 \end{align*}
	If we denote
	\begin{align*}
		\L(t):=\|(\delta_+, \delta_-)(t,\cdot)\|^2_{L^2(\E)},
	\end{align*} 
	then this can be written as
	\begin{align*}
		\L(t+\tilde t)-\L(t-\tilde t)\le (-c) \sum_{\nu\in\V_\partial}  \|(\delta_+, \delta_-)(\cdot, \nu)\|^2_{L^2([t-\tilde t, t+\tilde t])}.
	\end{align*}
 	Since $\tilde t\in(0,t)$ was chosen arbitrarily, a first consequence of this estimate is that 
 	$\L(t)$ is monotonically decreasing in $t$.
 	Now, assume that $\tilde t\ge T :=N \tfrac{\lmax}{c}$ and $t>T$, where $N$ is the  number of inner nodes of the network $G$ and $\lmax$ is the maximal length of a pipe in $G$. Then, we can apply the observability inequality \eqref{eq:obsInequ_L2} in order to show
 	\begin{align*} 
 		\L(t+T)-  \L(t-T)  \le (-\tfrac{c}{C}) \L(t),
 	\end{align*}
 	where $C>0$ is the constant in the observability inequality \eqref{eq:obsInequ_L2}.
	Since $\L(t)$ is decreasing, this implies
	\begin{align} \label{eq:proof_expSynchr_2}
		\L(t+T)\le \tilde C \L(t-T)
	\end{align}
	with $\tilde C :=\tfrac{1}{1+\tfrac{c}{C}}<1$.
	
	Now, we use a modified Gronwall-inequality (cf. \cite[Lemma 1]{Gugat2010}) in order to show that \eqref{eq:proof_expSynchr_2} implies exponential decrease of $\L(t)$:
	Let $s>0$. Then $s$ can be uniquely written as $s=\tilde s + 2 j T$ for some $j\in\N_0$ and $\tilde s\in[0,2T)$.
	By \eqref{eq:proof_expSynchr_2} we know that
	\begin{align*}
		\L(s) = \L(\tilde s + 2 j T) \le \tilde C ^j \L(\tilde s) \le  \tilde C ^j \L(0).
	\end{align*}
	Define $\mu :=-\tfrac{\ln(\tilde C)}{2T}$ and $C_1:=\exp(2T\mu)\L(0)$. Then $\mu>0$ due to $\tilde C<1$ and we have
	\begin{equation*}
		\L(s) \le \tilde C^j \L(0) = \exp(-2jT \mu) \L(0) 
		= \exp(-(2jT+\tilde s) \mu)  \exp(\tilde s \mu) \L(0) 
		\le \exp(-\mu s) C_1.  \qedhere
	\end{equation*}
\end{proof}

\begin{remark} \label{rem:synchr_not_tree_shaped}
	We can also show synchronization for general networks, if we add one suitable additional measurement point for every cycle of the network.
	More precisely, let $G$ be a network that contains cycles. For every cycle choose one edge $e$ that is part of the cycle and add at one point $x$ in this edge a measurement device that can measure the complete state of the gas (e.g. the value of $v$ and $\rho$, from which we can compute $R_+$ and $R_-$).  
	Mathematically this can be interpreted as if we cut the edge $e$ at the point $x$ and add two artificial boundary nodes at this point (see Fig.~\ref{fig:network4_auxiliary-network_no_cycles}). At this additional boundary nodes we choose as boundary condition in the observer system the value of $R_\pm(t,x)$, i.e., the value of the state of the original system at the point $x$.
	If we do this for all cycles of the network, we get an auxiliary tree-shaped network, on which the difference system $\delta$ satisfies a system of equations analogous to \eqref{eq:system_diff}-\eqref{eq:diff_CC} (with adapted $\E$ and $\V$). 
	By Theorem \ref{thm:expSynchr_L2}, $\|(\delta_+, \delta_-)(t,\cdot)\|^2_{L^2(\E)}$ converges exponentially to zero for long times. This means that the state of the observer system converges exponentially towards the original solution.
\end{remark}

\begin{figure}[ht]
	\centering
	\begin{subfigure}[b]{0.49\textwidth}
		\centering
		\begin{tikzpicture}[scale=1.2]
		
		\draw[black, thick, ->] (4.5,0) -- (5.47,0.72);
		\draw[black, thick, ->] (3,0)  -- (4.45,0);
		\draw[black, thick, ->] (4.5,0) -- (5.47,-0.72);
		\draw[black, thick, ->] (5.5,0.75) -- (6.47,1.47);
		\draw[black, thick, ->] (5.5,0.75) -- (6.47,0.23);
		\draw[black, thick, ->] (5.5,0.75) -- (5.5,-0.70);
		\draw[black, thick, ->] (5.5,-0.75) -- (6.47,-1.47);

		\filldraw [black] (3,0) circle (2pt);
		\filldraw [black] (4.5,0) circle (2pt);
		\filldraw [black] (5.5,0.75) circle (2pt);
		\filldraw [black] (5.5, -0.75) circle (2pt);
		\filldraw [black] (6.5, 1.5) circle (2pt);
		\filldraw [black] (6.5, 0.2) circle (2pt);
		\filldraw [black] (6.5, -1.5) circle (2pt);
		\end{tikzpicture}
	\end{subfigure}
	\hfill
	\begin{subfigure}[b]{0.49\textwidth}
		\centering
		\begin{tikzpicture}[scale=1.2]
		
		\draw[black, thick, ->] (4.5,0) -- (5.47,0.72);
		\draw[black, thick, ->] (3,0)  -- (4.45,0);
		\draw[black, thick, ->] (4.5,0) -- (5.47,-0.72);
		\draw[black, thick, ->] (5.5,0.75) -- (6.47,1.47);
		\draw[black, thick, ->] (5.5,0.75) -- (6.47,0.23);
		\draw[black, thick, ->] (5.5,0.75) -- (5.5,0.15);
		\draw[black, thick, ->] (5.5,-0.1) -- (5.5,-0.7);
		\draw[black, thick, ->] (5.5,-0.75) -- (6.47,-1.47);

		\filldraw [black] (3,0) circle (2pt);
		\filldraw [black] (4.5,0) circle (2pt);
		\filldraw [black] (5.5,0.75) circle (2pt);
		\filldraw [black] (5.5, -0.75) circle (2pt);
		\filldraw [black] (5.5,0.1) circle (2pt);
		\filldraw [black] (5.5,-0.1) circle (2pt);
		\filldraw [black] (6.5, 1.5) circle (2pt);
		\filldraw [black] (6.5, 0.2) circle (2pt);
		\filldraw [black] (6.5, -1.5) circle (2pt);
		\end{tikzpicture}
	\end{subfigure}
		
	\caption{Construction of an auxiliary tree-shaped network (right) out of a network that contains a cycle (left) by adding one measurement point per cycle and cutting the cycle at this measurement point.}
	\label{fig:network4_auxiliary-network_no_cycles}
\end{figure}
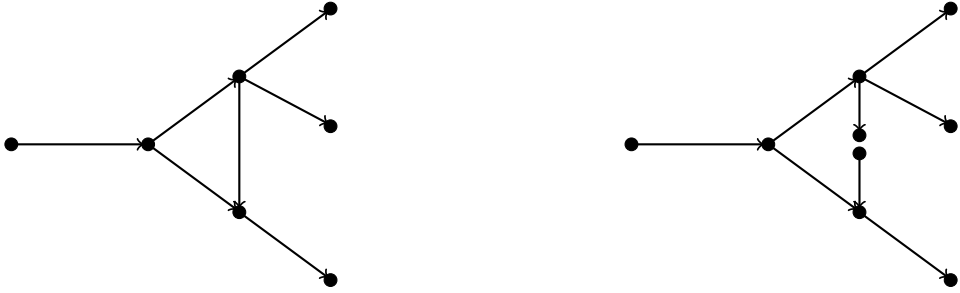

\begin{remark}
	One might wonder whether finite time synchronization holds for the system \eqref{eq:system_diff}--\eqref{eq:diff_CC} on tree-shaped networks.
	In general, the answer is no, as the following example shows.
	Consider the network shown in Fig.~\ref{fig:network_no_fin_time_synchr} together with the initial conditions
	\begin{align*}
		&\delta_\pm^3(0,x)=1, \quad x\in[0,\ell^3],\\
		&\delta_\pm^e(0,x)=0, \quad x\in[0,\ell^e], \quad  e\in\{1,2,4,5\}.
	\end{align*}
	Due to the boundary condition \eqref{eq:diff_BC} and the fact, that the value of $\delta_\pm$ is constant along the characteristic curves for $\lambda=0$, we have $\delta_+^e(t,\nu_3)=0$ for $e\in\{1,2\}$. Therefore the coupling condition \eqref{eq:diff_BC} implies
	\begin{align*}
		\delta^3_+(t,\nu_3)=-\delta^3_-(t,\nu_3)+\tfrac{2}{3} \sum_{g\in\E(\nu_3)} \delta^g\In(t,\nu_3) 
		=-\tfrac{1}{3}\delta^3_-(t,\nu_3)
	\end{align*}
	and similarly
	\begin{align*}
		\delta^3_-(t,\nu_4)=-\tfrac{1}{3}\delta^3_+(t,\nu_4).
	\end{align*}
	Using this, we can show that the system admits a solution that satisfies
	\begin{align}
		\delta^3_+(t,\nu_3)=\delta^3_-(t,\nu_4)= \left(-\tfrac{1}{3}\right)^{n+1}, \quad t\in[n \tfrac{\ell^e}{c}, (n+1)\tfrac{\ell^e}{c})
	\end{align}
	on the nodes $\nu_3$ and $\nu_4$, respectively. On edge 3, the solution can be computed from these values by
	\begin{align*}
		\delta_+^3(t,x)=\delta_+^3(t-\tfrac{x}{c}, \nu_3),\quad
		\delta_-^3(t,x)=\delta_+^3(t-\tfrac{\ell^3-x}{c}, \nu_4).
	\end{align*}
	In particular, the difference system admits a solution that converges exponentially to zero, but does not converge to zero in finite time, i.e., we have no finite time synchronization.
\end{remark}

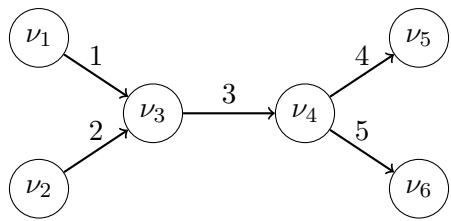
\begin{figure}[ht]
	\centering
	\begin{tikzpicture}[scale=1]
			\node (1) at (-3.5,1) [circle,draw]{$\nu_1$};
			\node (2) at (-3.5,-1) [circle,draw]{$\nu_2$};
			\node (3) at (-2,0) [circle,draw]{$\nu_3$};
			\node (4) at (0,0) [circle,draw]{$\nu_4$};
			\node (5) at (1.5,1) [circle,draw]{$\nu_5$};
			\node (6) at (1.5,-1) [circle,draw]{$\nu_6$};

			\draw[->, thick] (1) to node[above] {1} (3);
			\draw[->, thick] (2) to node[above] {2} (3);
			\draw[->, thick] (3) to node[above] {3} (4);
			\draw[->, thick] (4) to node[above] {4} (5);
			\draw[->, thick] (4) to node[above] {5} (6);
		\end{tikzpicture}
	\caption{A network with five pipes and six nodes.}
	\label{fig:network_no_fin_time_synchr}
\end{figure}

\section{Synchronization of observer for System with linear friction}   \label{sec:obs_linear_friction}
Now, we investigate the synchronization of the boundary observer for $\lambda>0$, i.e., for the case with linear source term.
Then, the difference system has the form
\begin{align}
	&\begin{pmatrix}
		\dt \delta_+^e \\ \dt \delta_-^e
	\end{pmatrix}
	+\begin{pmatrix}
		c & 0\\ 0 &-c
	\end{pmatrix}
	\begin{pmatrix}
		\dx \delta_+^e \\ \dx \delta_-^e
	\end{pmatrix}  
	= \begin{pmatrix}
		-\lambda (\delta_+^e-\delta_-^e)\\ \lambda (\delta_+^e-\delta_-^e)
	\end{pmatrix}, 
	&& e\in\E, \label{eq:system_diff_fric}\\
	&\delta^e_\pm(0,x)=y_\pm^e(x)-z_\pm^e(x)=:\tilde y_\pm^e(x),&& x\in(0,\ell^e),\  e\in\E, \\
	&\delta^e\out(t,\nu)=0,&& t\in(0,T),\  \nu\in\V_\partial,\  e\in\E(\nu), \label{eq:diff_BC_fric} \\
	&\delta^e\out(t,\nu)=-\delta^e\In(t,\nu)+\tfrac{2}{|\E(\nu)|} \sum_{g\in\E(\nu)} \delta^g\In(t,\nu),&& t\in(0,T),\  \nu\in\V\setminus\V_\partial,\  e\in\E(\nu).  \label{eq:diff_CC_fric}
\end{align}
Note that due to the friction term at the right hand side, the system state is not constant along the characteristic curves $(s, \xi_\pm(s,x,t))=(s, x\pm c(s-t))$, 
which are defined by
\begin{align*}
	\partial_s \xi_\pm(s,x,t)=\pm c, \quad \xi_\pm(t,x,t)=x.
\end{align*}

\begin{remark}  \label{rem:function_spaces}  
		The existence of solutions of hyperbolic systems of the form \eqref{eq:system_diff_fric}--\eqref{eq:diff_CC_fric} or similar can be shown by a fixed-point argument, cf. \cite{giesselmann2023observerbased,Gugat2021}.
		
		More precisely, consider the system \eqref{eq:system_diff_fric}--\eqref{eq:diff_CC_fric} and assume that the initial data are Lipschitz-continuous with Lipschitz-constant $L_I$ and are compatible with the boundary and coupling conditions. Let $T>0$ and assume that the initial data $\tilde y_\pm^e(x)$ are bounded in $C^0([0,\ell^e])$ for all $e\in\E$.
		Then, for given $S_{\max}, L_R>0$, there exists $B_{\max}>0$ and $L_I >0$ such that, if 
		$|\tilde y_\pm^e(x)| \le B_{\max} $ for all $e\in\E$, $x\in[0,\ell^e]$ and the initial data are Lipschitz-continuous with Lipschitz-constant $L_I$, then there exists a unique solution $((\delta_+^e, \delta_-^e))_{e\in\E}$ of \eqref{eq:system_diff_fric}--\eqref{eq:diff_CC_fric} in 
		\begin{align*}
			\M(S_{\max}, L_R):=\{& ((\delta_+^e, \delta_-^e))_{e\in\E} \, \vert \, (\delta_+^e, \delta_-^e)\in C^0([0,T]\times[0,\ell^e]),\\
			& |\delta_\pm^e|\le S_{\max} \text{ for all } e\in\E 
			\text{ and all $\delta_\pm^e$ are Lipschitz-continous
			}\\
			&\text{with respect to $x$ with Lipschitz-constant } L_R \}.
		\end{align*}
		This can be shown by writing the equation \eqref{eq:system_diff_fric} as an integral equation along the characteristic curves and then applying
		a fixed-point argument for the fixed-point mapping $\Phi$ that is defined by $\delta_{\pm,i+1} = \Phi_\pm(\delta_{\pm,i})$, where $\delta_{\pm,i+1} $ is the solution of
		\begin{align*}
			\delta_{\pm,i+1}^e(t,x)= \delta_{\pm,i+1}^{e}(t_\pm^e(t,x), \xi_\pm(t_\pm^e(t,x),x,t)) +\int_{t_\pm^e(t,x)}^t \mp \lambda (\delta_{+,i}^{e}-\delta_{-,i}^{e})(s, \xi_\pm(s,x,t)) ds
		\end{align*}
		for $(t,x)\in [0,T]\times[0,\ell^e]$, $e\in\E$. The existence of a unique fixed-point in $\M(S_{\max}, L_R)$ can be shown similar to \cite[Theorem 1]{Gugat2021} and \cite[Theorem 3.4]{giesselmann2023observerbased}.	
		
		Now, since $\delta_\pm^e\in \M(S_{\max}, L_R)$, we see similar to \cite[Remark 4.1]{giesselmann2023observerbased} that $\delta_\pm^e$ is 
		differentiable with respect to $x$ a.e. in $(0,\ell^e)$ and therefore also
		\begin{align*} 
			\delta_\pm^e\in C^0([0, T ]; W^{1,\infty}(0,\ell^e)) \cap W^{1,\infty}(0, T ; L^\infty(0,\ell^e)).
		\end{align*}	
\end{remark}

\begin{remark} \label{rem:existence_extension}
	In the proof of Lemma~\ref{lem:reversed_obs_inequ} we will need an extension of the solution on the pipe $[0,\ell^e]$ to a solution on the `imaginary' pipe $[\ell^e,2\ell^e]$. More precisely, for $e\in\E$, $n\in\N_0$ and $\bar t>(\tfrac{n}{2}+1)\tfrac{\ell^e}{c}$, we need the existence of a unique solution of
	\begin{align}
		&\begin{pmatrix}
			\dt \bar \delta_+^e \\ \dt \bar \delta_-^e
		\end{pmatrix}
		+\begin{pmatrix}
			c & 0\\ 0 &-c
		\end{pmatrix}
		\begin{pmatrix}
			\dx \bar \delta_+^e \\ \dx \bar \delta_-^e
		\end{pmatrix}  
		= \begin{pmatrix}
			-\lambda (\bar \delta_+^e-\bar \delta_-^e)\\ \lambda (\bar \delta_+^e-\bar \delta_-^e)
		\end{pmatrix}, \qquad 
		(t,x)\in\Omega_{\bar t, n}, \label{eq:system_extension}\\
		&\,\bar \delta^e_\pm(t,\ell^e)=\delta^e_\pm(t,\ell^e), \qquad\qquad\qquad t\in[\bar t-(\tfrac{n}{2}+1)\tfrac{\ell^e}{c},\bar t+(\tfrac{n}{2}+1)\tfrac{\ell^e}{c} ],\label{eq:BC_extension}
	\end{align}
	where $\delta^e_\pm(t,\ell^e)$ is given by the solution on $[0,\ell^e]$ and 
	\begin{align*}
		\Omega_{\bar t, n}=\{&(t,x)\in[\bar t-(\tfrac{n}{2}+1)\tfrac{\ell^e}{c},\bar t+(\tfrac{n}{2}+1)\tfrac{\ell^e}{c} ]\times [\ell^e,2\ell^e]\, \\
		&\vert\, \bar t-\tfrac{n}{2}\tfrac{\ell^e}{c}-\tfrac{2\ell^e-x}{c}\le t\le \bar t+\tfrac{n}{2}\tfrac{\ell^e}{c}+\tfrac{2\ell^e-x}{c} \},
	\end{align*}
	cf. Figure~\ref{fig:existence_sol_extension}.
	Existence of such solutions can be shown by a fixed-point argument analogously to Remark~\ref{rem:function_spaces}.
	The important point here is that every characteristic $\xi_\pm(\cdot,x,t)$, $(t,x)\in\Omega_{\bar t, n}$, hits the boundary $x=\ell^e$ at a time point $t_\pm^e(t,x)\in [\bar t-(\tfrac{n}{2}+1)\tfrac{\ell^e}{c},\bar t+(\tfrac{n}{2}+1)\tfrac{\ell^e}{c} ] $. 
\end{remark}

\begin{figure}[ht]
	\centering
	\begin{subfigure}[b]{0.49\textwidth}
		\centering
	\begin{tikzpicture}[scale=1]
		
		\draw [->,thick] (0,-2) to (0,2) node (taxis) [above] {$t$};
		\draw [->,thick] (-0.5,0) to (4.5,0) node (taxis) [right] {$x$};
		
		\foreach \x/\xtext in {0/0, 2/\ell^e, 4/2\ell^e}
		\draw[thick,shift={(\x,0)}] (0pt,2pt) -- (0pt,-2pt) node[below right] {$\xtext$};
		
		\foreach \t/\text in {-1.6/\bar t-(\tfrac{n}{2}+1)\tfrac{\ell^e}{c}\,, 1.6/ \bar t+(\tfrac{n}{2}+1)\tfrac{\ell^e}{c}\,}
		\draw[thick,shift={(0,\t)}] (-2pt, 0pt) -- (2pt,0pt) node[left] {$\text$};
		
		\draw[gray, fill, fill opacity=0.2] (2,1.6) coordinate (A) -- (4,0.8) coordinate (B)
		-- (4,-0.8) coordinate (C) --(2,-1.6) coordinate (D) -- (A);
		
		\draw[gray] (3.2,1.1) node[above] {$\Omega_{\bar t, n}$};

		\foreach \y in {1.25, 1}
		\draw[dashed, ->] (2,\y) -- (4, \y-0.8) ;
		
		\draw (2,1.2) node[left] {$\delta_-^ e$};
		
		\foreach \y in {-1.25, -1}
		\draw[dashed, ->] (2,\y) -- (4, \y+0.8) ;
		
		\draw (2,-1.2) node[left] {$\delta_+^ e$};

	\end{tikzpicture}
	\end{subfigure}
	\hfill
	\begin{subfigure}[b]{0.49\textwidth}
		\centering
		\begin{tikzpicture}[scale=1]
				
			\draw [->,thick] (0,-2) to (0,2) node (taxis) [above] {$t$};
			\draw [->,thick] (-0.5,0) to (3,0) node (taxis) [right] {$x$};
			
			\foreach \x/\xtext in {0/0, 2/\ell^e}
			\draw[thick,shift={(\x,0)}] (0pt,2pt) -- (0pt,-2pt) node[below right] {$\xtext$};
			
			\foreach \t/\text in {-1.6/\bar t-\tfrac{\ell^e}{c}\,, 1.6/ \bar t+\tfrac{\ell^e}{c}\,}
			\draw[thick,shift={(0,\t)}] (-2pt, 0pt) -- (2pt,0pt) node[left] {$\text$};
			
			\draw[gray, fill, fill opacity=0.2] (0,1.6) coordinate (A) -- (2,0) coordinate (B)
			-- (2,0) coordinate (C) --(0,-1.6) coordinate (D) -- (A);
			
			\draw[gray] (1.2,1) node[above] {$\Omega_{t}$};

		\end{tikzpicture}
	\end{subfigure}
	
	\caption{Extension of the solution of \eqref{eq:system_diff_fric}--\eqref{eq:diff_CC_fric} on the pipe $[0,\ell^e]$ to a solution of \eqref{eq:system_extension}--\eqref{eq:BC_extension} on the pipe $[\ell^e,2\ell^e]$ (left) and sketch of $\Omega_t$ from Lemma~\ref{lem:obs_inequ_single_pipe}.}
	\label{fig:existence_sol_extension}
\end{figure}
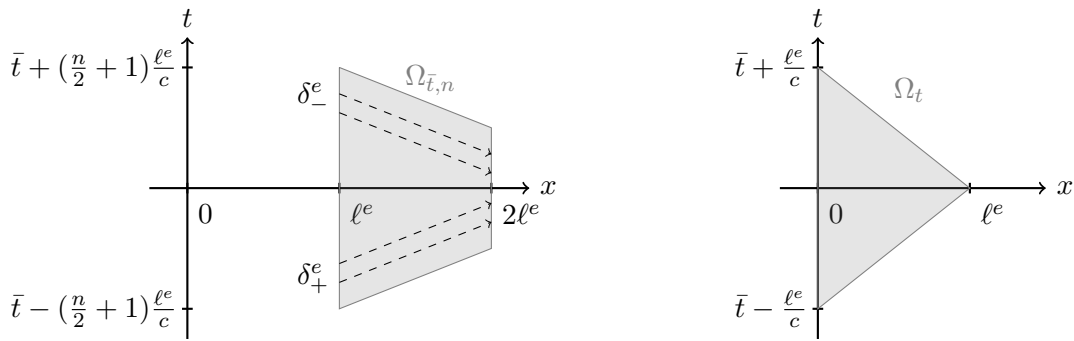

As for the system without friction, we want to identify the minimal number of measurements that are needed in addition to boundary measurements in order to guarantee synchronization of the state of the observer system towards the original system state.
Showing this synchronization is equivalent to showing the convergence of the difference system \eqref{eq:system_diff_fric}--\eqref{eq:diff_CC_fric} to zero in the long time limit.
First, we present an example that shows that, if only boundary measurements are given, in general we cannot expect convergence for networks that contain a cycle, at least no convergence that is faster than the decay due to the friction term.
Second, we show that for networks that do not contain a cycle, i.e., for tree-shaped networks, the state of the difference system converges exponentially to zero with a rate that is uniform in the $\lambda \rightarrow 0$ limit.
As before, for networks that contain cycles we can add one measurement point in every cycle in order to guarantee synchronization, see Remark~\ref{rem:synchr_not_tree_shaped}.

\begin{example}
	We consider a general network with a cycle that does not contain a boundary node, i.e., we consider the same network as in Section~\ref{sec:example_no_synchr} and complement it with the same initial conditions \eqref{eq:exl_no_synchr-IC1}--\eqref{eq:exl_no_synchr-IC2}.
	As shown in Section~\ref{sec:example_no_synchr}, the initial conditions are compatible with the boundary and coupling conditions.
	Since now we have a nonzero source term, this no longer implies that the solution is constant over time, but one can show that
	\begin{align}
		&\delta_+^e(t,x)=a\exp(-2\lambda t), \quad \delta_+^e(t,x)=-a\exp(-2\lambda t),\quad e\in\E_c,\ t>0,\ x\in(0,\ell^e) \label{eq:example_cycle_sol1}\\
		&\delta_\pm^e(t,x)=0, \quad e\in\E_n,\ t>0,\ x\in(0,\ell^e) \label{eq:example_cycle_sol2}
	\end{align} 
	is a solution of \eqref{eq:system_diff_fric}--\eqref{eq:diff_CC_fric} with initial conditions \eqref{eq:exl_no_synchr-IC1}--\eqref{eq:exl_no_synchr-IC2}.
	Note that by Remark~\ref{rem:function_spaces} 
	there exists a unique solution of \eqref{eq:system_diff_fric}--\eqref{eq:diff_CC_fric}, i.e., \eqref{eq:example_cycle_sol1}--\eqref{eq:example_cycle_sol2} is the unique solution.
	This solution is constant in space on each pipe and decays only with a rate that results from the friction parameter. In particular, we have no additional decay resulting from the boundary condition \eqref{eq:diff_BC_fric}.
\end{example}

As a first step towards the exponential synchronization of the observer system for tree-shaped networks, we show a $L^2$-observability inequality for a single pipe. The proof is similar to the proof of \cite[Theorem 2]{Gugat2021}.

\begin{lem}\label{lem:obs_inequ_single_pipe}
	Let $e\in\E$, $t>\tfrac{\ell^e}{c}$ and let $(\delta_+^e, \delta_-^e)\in W^{1,\infty}(\Omega_t)$  
	be a solution of \eqref{eq:system_diff_fric} on
	\begin{align*}
		\Omega_t:=\{(s,x)\in[t-\tfrac{\ell^e}{c},t+\tfrac{\ell^e}{c} ]\times [0,\ell^e] \, \vert\, t-\tfrac{\ell^e-x}{c}\le s\le t+\tfrac{\ell^e-x}{c} \}
	\end{align*}
	(see Figure~\ref{fig:existence_sol_extension}). 
	Then there exists a constant $C>0$ such that
	\begin{align*}
		\int_0^{\ell^e} \left( |\delta_+^e(t,x)|^2+  |\delta_-^e(t,x)|^2\right) dx
		\le C \int_{t-\tfrac{\ell^e}{c}}^{t+\tfrac{\ell^e}{c}} \left( |\delta_+^e(s,0)|^2+  |\delta_-^e(s,0)|^2\right)  ds.
	\end{align*}
\end{lem}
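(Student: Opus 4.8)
The plan is to represent the solution on $\Omega_t$ by integrating \eqref{eq:system_diff_fric} along the characteristics $\xi_\pm$ and then to control the resulting integral (coupling) terms by a Gronwall argument in the spatial variable, mirroring the strategy of \cite[Theorem~2]{Gugat2021}. For orientation note first that in the friction-free case $\lambda=0$ the variables are constant along characteristics, so $\delta^e_+(t,x)=\delta^e_+(t-\tfrac{x}{c},0)$ and $\delta^e_-(t,x)=\delta^e_-(t+\tfrac{x}{c},0)$; squaring and integrating over $x\in[0,\ell^e]$ then gives the claim directly with $C=c$. The whole difficulty is therefore the coupling source $\pm\lambda(\delta^e_+-\delta^e_-)$, which mixes the two fields.

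First I would derive a Duhamel representation along characteristics. Using the integrating factor $e^{\lambda s}$ along $\xi_+$ and $\xi_-$, one obtains, for $(s,x)\in\Omega_t$,
\[
\delta^e_+(s,x)=e^{-\lambda x/c}\,\delta^e_+(s-\tfrac{x}{c},0)+\lambda\int_{s-x/c}^{s}e^{-\lambda(s-\sigma)}\,\delta^e_-(\sigma,\xi_+(\sigma,x,s))\,d\sigma,
\]
and an analogous formula for $\delta^e_-(s,x)$ in terms of $\delta^e_-(s+\tfrac{x}{c},0)$ and an integral of $\delta^e_+$. The key geometric point, which I would verify from the definition of $\Omega_t$, is that for every $(s,x)\in\Omega_t$ the backward $+$-characteristic and the forward $-$-characteristic reach $x=0$ at times $s\mp\tfrac{x}{c}\in[t-\tfrac{\ell^e}{c},t+\tfrac{\ell^e}{c}]$, and that the characteristic segments stay inside $\Omega_t$, so the coupling integrals only sample interior values at positions $x'\le x$.

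Next I would control the interior $L^2$-mass. Writing $G(t):=\int_{t-\ell^e/c}^{t+\ell^e/c}(|\delta^e_+(s,0)|^2+|\delta^e_-(s,0)|^2)\,ds$ for the right-hand side and $a(\xi):=\|(\delta^e_+,\delta^e_-)\|_{L^2(\Omega_t\cap\{x\le\xi\})}^2$, I would square the representation, apply the Cauchy--Schwarz and Jensen inequalities to the integral term, and integrate over the vertical slice $\{x=\xi\}\cap\Omega_t$. Since the coupling integrals only see positions $x'\le\xi$, a Fubini rearrangement bounds them by $a(\xi)$, yielding a differential inequality of the form $a'(\xi)\le C\,G(t)+C\lambda^2 a(\xi)$ with $a(0)=0$. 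Gronwall's inequality in $\xi$ then gives $\|(\delta^e_+,\delta^e_-)\|_{L^2(\Omega_t)}^2\le C\,G(t)$, with a constant depending on $\lambda,\ell^e,c$ (which is admissible here, since the lemma only asserts existence of $C$).

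Finally I would apply the same representation on the chord $s=t$, square, and integrate over $x\in[0,\ell^e]$. The boundary-transport terms integrate to $\le 2c\,G(t)$ exactly as in the $\lambda=0$ case, while the coupling integrals are bounded by $\lambda^2\|(\delta^e_+,\delta^e_-)\|_{L^2(\Omega_t)}^2$ and hence by $C\,G(t)$ via the previous step; combining gives the assertion. I expect the main obstacle to be the bookkeeping needed to close the coupling for arbitrary $\lambda>0$: a single pointwise estimate only closes when $\lambda\ell^e/c$ is small, so the essential idea is to convert the coupling into the spatial Gronwall inequality above (equivalently, to exploit that the coupling operator is Volterra along characteristics of length at most $\tfrac{\ell^e}{c}$). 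Care is also needed to verify that all boundary traces that appear lie within the window $[t-\tfrac{\ell^e}{c},t+\tfrac{\ell^e}{c}]$.
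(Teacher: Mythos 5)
Your proof is correct: the Duhamel representation, the domain-of-determinacy check (every characteristic segment from $\Omega_t$ stays in $\Omega_t$ and hits $x=0$ inside the window), the Fubini bound of the characteristic line integrals by the cumulative area, and the spatial Gronwall argument all close. One small imprecision: in the representation of $\delta_-^e$, which must be traced \emph{forward} in time to the boundary, the integrating factor produces growth factors bounded by $\exp(\lambda\ell^e/c)$ rather than decay, so the boundary-transport terms are bounded by $2c\exp(2\lambda\ell^e/c)$ times your $G(t)$ rather than $2c\,G(t)$; this is harmless since $C$ may depend on $\lambda,\ell^e,c$. At the strategic level you follow the same route as the paper -- both mirror \cite[Theorem 2]{Gugat2021}: estimate along characteristics back to $x=0$, then close the coupling by a Gronwall inequality in the spatial variable -- but the implementation differs in two respects. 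The paper never forms a Duhamel formula; it works with the pointwise identity $\partial_s|\delta_\pm^e|^2=\mp2\lambda\,\delta_\pm^e(\delta_+^e-\delta_-^e)$ along characteristics plus Young's inequality (exploiting a sign cancellation for $\delta_+^e$, whence its asymmetric constants $\tfrac{\lambda}{2}$ and $\tfrac{5}{2}\lambda$), and its Gronwall functional is the one-dimensional time-slice integral $\H(x)=\int_{t-(\ell^e-x)/c}^{t+(\ell^e-x)/c}\left(|\delta_+^e|^2+|\delta_-^e|^2\right)(s,x)\,ds$, for which the PDE and the signs of the Leibniz boundary terms give the homogeneous inequality $\tfrac{d}{dx}\H(x)\le 2\tfrac{\lambda}{c}\H(x)$, the boundary data entering only through $\H(0)$. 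Your functional is instead the cumulative area $a(\xi)=\|(\delta_+^e,\delta_-^e)\|^2_{L^2(\Omega_t\cap\{x\le\xi\})}$, which satisfies an inhomogeneous inequality $a'(\xi)\le C\,G(t)+C\lambda^2a(\xi)$ but requires no sign structure at all. What each buys: the paper's computation yields the sharper constant behavior $C\to c$ as $\lambda\to0$ (your squaring step gives $C\to 2c$), which is exactly the content of the remark following the lemma, though boundedness of $C$ in the no-friction limit -- the property actually used later -- survives in your version too; conversely, your Volterra-type argument is more mechanical and transfers verbatim to any Lipschitz coupling, which is essentially the generality the paper needs in its closing remark on the semilinear friction law, where it instead reruns its estimates via the mean value theorem.
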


\begin{proof}
	For simplicity of notation we leave out the superscript $e$. 
	By multiplying equation \eqref{eq:system_diff_fric} with $\delta_+$ we have
	\begin{align}
		&|\delta_+|^2(t,x)-|\delta_+|^2(t-\tfrac{x}{c},0)=
		\int_{t-\tfrac{x}{c}}^{t} \ds |\delta_+|^2(s, x+c(s-t)) \, ds  \notag \\
		&= \int_{t-\tfrac{x}{c}}^{t} -2\lambda\,\delta_+ (\delta_+-\delta_-)(s, x+c(s-t)) \, ds
		\le \tfrac{\lambda}{2} \int_{t-\tfrac{x}{c}}^{t} \left(|\delta_+|^2 +|\delta_-|^2 \right) (s, x+c(s-t)) \, ds. \label{eq:estimate_deltap_obs_ineq}
	\end{align}
	This implies
	\begin{align*}
		\int_0^\ell |\delta_+(t,x)|^2 dx
		\le&\,  \,\int_0^\ell|\delta_+(t-\tfrac{x}{c},0)|^2 dx
		+ \tfrac{\lambda}{2}  \int_0^\ell \int_{t-\tfrac{x}{c}}^{t} \left(|\delta_+|^2+|\delta_-|^2\right)(s, x+c(s-t)) \, ds\, dx\\
		=&\, c \int_{t-\tfrac{\ell}{c}}^{t} |\delta_+(s,0)|^2 ds
		+ \tfrac{\lambda}{2}  \int_0^\ell \int_{t-\tfrac{\ell-x}{c}}^{t} \left(|\delta_+|^2+|\delta_-|^2\right)(s, x) \, ds\, dx,
	\end{align*}
	where in the second step we first changed the integration order of the second integral and then substituted $x+c(s-t)$ by $x$.
	Similarly, we can show
	\begin{align*}
		\int_0^\ell |\delta_-(t,x)|^2 dx
		\le c \int_{t}^{t+\tfrac{\ell}{c}} |\delta_-(s,0)|^2 ds
		+ \tfrac{5}{2} \lambda  \int_0^\ell \int_{t}^{t+\tfrac{\ell-x}{c}} \left(|\delta_+|^2+|\delta_-|^2\right)(s, x) \, ds\, dx.
	\end{align*}
	For $\delta_-$ we have a larger constant in front of the second integral since in this case the signs do not allow the same cancellation as in \eqref{eq:estimate_deltap_obs_ineq}. 
	
	Now, we define 
	\begin{align*}
		\H(x):= \int_{t-\tfrac{\ell-x}{c}}^{t+\tfrac{\ell-x}{c}} \left( |\delta_+(s,x)|^2+  |\delta_-(s,x)|^2\right)  ds.
	\end{align*}
	Then
	\begin{align*}
		\tfrac{d}{dx} \H(x)
		=&-\tfrac{1}{c} \left(|\delta_+|^2+|\delta_-|^2 \right)(t+\tfrac{\ell-x}{c})
			-\tfrac{1}{c} \left(|\delta_+|^2+|\delta_-|^2 \right)(t-\tfrac{\ell-x}{c})\\
			&+\int_{t-\tfrac{\ell-x}{c}}^{t+\tfrac{\ell-x}{c}} \dx \left( |\delta_+(s,x)|^2+  |\delta_-(s,x)|^2\right)  ds\\
		=&-\tfrac{1}{c} \left(|\delta_+|^2+|\delta_-|^2 \right)(t+\tfrac{\ell-x}{c})
			-\tfrac{1}{c} \left(|\delta_+|^2+|\delta_-|^2 	\right)(t-\tfrac{\ell-x}{c})\\
			&+\int_{t-\tfrac{\ell-x}{c}}^{t+\tfrac{\ell-x}{c}}  \left( 	-\tfrac{1}{c}\dt |\delta_+|^2+  \tfrac{1}{c} \dt |\delta_-|^2 -2 \tfrac{\lambda}{c} (\delta_+^2-\delta_-^2)\right)(s,x)  ds\\
		=& -\tfrac{2}{c} |\delta_+|^2(t+\tfrac{\ell-x}{c})-\tfrac{2}{c} 	|\delta_-|^2(t-\tfrac{\ell-x}{c})
			-2 \tfrac{\lambda}{c}\int_{t-\tfrac{\ell-x}{c}}^{t+\tfrac{\ell-x}{c}}    (\delta_+^2-\delta_-^2)(s,x)  ds\\
		\le & \,2\,\tfrac{\lambda}{c}\, \H(x).
	\end{align*}
	Using a Gronwall lemma this implies
	\begin{align*}
		\H(x)\le \exp(2\,\tfrac{\lambda}{c}\,\ell)\, \H(0).
	\end{align*}

	In summary, we have shown
	\begin{align*}
		&\int_0^\ell \left(|\delta_+(t,x)|^2+ |\delta_-(t,x)|^2\right) dx
		\le c \int_{t-\tfrac{\ell}{c}}^{t+\tfrac{\ell}{c}} \left(|\delta_+(s,0)|^2 + |\delta_-(s,0)|^2\right) ds
		+ \tfrac{5}{2} \lambda 
		\int_0^\ell \H(x) dx\\
		&\qquad\le \left(  c + \tfrac{5}{2} \lambda 
		\exp(2\,\tfrac{\lambda}{c}\,\ell) \right) \int_{t-\tfrac{\ell}{c}}^{t+\tfrac{\ell}{c}} \left(|\delta_+(s,0)|^2 + |\delta_-(s,0)|^2\right) ds.   \qedhere
	\end{align*}
\end{proof}

\begin{remark}
	In the limit $\lambda\rightarrow 0$, i.e. in the no-friction limit, the constant in the observability inequality im Lemma~\ref{lem:obs_inequ_single_pipe} converges to $c>0$. In particular, the observability constant does not go to infinity and is bounded for $\lambda\rightarrow 0$.
\end{remark}

In order to show an $L^2$-observability inequality for the system with linear friction on a tree-shaped network, we have to estimate the value of $\delta_\pm$ at one end of a pipe by the value of $\delta_\pm$ at the other end of the pipe. For this, we will use the following lemma.
\begin{lem}\label{lem:reversed_obs_inequ}
	Let $(\delta_+^e, \delta_-^e), e\in\E$ be a solution of \eqref{eq:system_diff_fric}--\eqref{eq:diff_CC_fric} on a network $G=(\V, \E)$ and let $t>\tfrac{5}{2} \tfrac{\lmax}{c}$. Then we can estimate
	\begin{align} \label{eq:inv_obs_inequality}
		\int_{t-\tfrac{1}{2}\tfrac{\ell^e}{c}}^{t+\tfrac{1}{2}\tfrac{\ell^e}{c}}
		\left(|\delta^e_+(s,\ell^e) |^2+ |\delta^e_-(s,\ell^e) |^2\right) ds
		\le C_2 \int_{t-\tfrac{5}{2}\tfrac{\ell^e}{c}}^{t+\tfrac{5}{2}\tfrac{\ell^e}{c}}
		\left(|\delta^e_+(s,0) |^2+ |\delta^e_-(s,0) |^2\right) ds
	\end{align}
	for all $e\in\E$ with a constant $C_2>0$.
\end{lem}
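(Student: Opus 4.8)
The plan is to reduce the reversed inequality to the forward observability inequality of Lemma~\ref{lem:obs_inequ_single_pipe}, applied on the doubled pipe $[0,2\ell^e]$ whose existence is guaranteed by Remark~\ref{rem:existence_extension}. The point is that the right endpoint $x=\ell^e$ of the real pipe is the \emph{interior} interface of the doubled pipe $[0,2\ell^e]$, so information at $x=\ell^e$ can be traced back to the genuine boundary $x=0$, where the data on the right-hand side of \eqref{eq:inv_obs_inequality} live.

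First I would treat the two characteristic families separately. For $\delta_+^e$ (which travels to the right) I trace the forward characteristic through $(s,\ell^e)$ into the imaginary pipe: integrating \eqref{eq:system_diff_fric} along $\xi_+$ gives, for $s\in[\tau_+-\tfrac{\ell^e}{c},\tau_+]$ with $\tau_+:=t+\tfrac12\tfrac{\ell^e}{c}$, an identity expressing $\delta_+^e(s,\ell^e)$ through $\delta_+^e(\tau_+,\ell^e+c(\tau_+-s))$ plus a $\lambda$-integral of $\delta_+^e-\delta_-^e$ along the characteristic. After squaring, applying Cauchy--Schwarz, integrating over $s$ and substituting $x=\ell^e+c(\tau_+-s)$, the leading term becomes $\tfrac1c\int_{\ell^e}^{2\ell^e}|\delta_+^e(\tau_+,x)|^2\,dx$, i.e.\ a spatial slice of the extended pipe. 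Symmetrically, for $\delta_-^e$ (travelling to the left) I trace the backward characteristic through $(s,\ell^e)$ into the imaginary pipe and, with $\tau_-:=t-\tfrac12\tfrac{\ell^e}{c}$ and $s\in[\tau_-,\tau_-+\tfrac{\ell^e}{c}]$, obtain $\tfrac1c\int_{\ell^e}^{2\ell^e}|\delta_-^e(\tau_-,x)|^2\,dx$ plus a $\lambda$-integral. In both cases the range of $s$ is exactly the window $[t-\tfrac12\tfrac{\ell^e}{c},t+\tfrac12\tfrac{\ell^e}{c}]$ on the left of \eqref{eq:inv_obs_inequality}.

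It then remains to bound the two spatial slices, together with the leftover space--time integrals of $|\delta_\pm^e|^2$ over $[\ell^e,2\ell^e]$, by boundary data at $x=0$. For this I apply Lemma~\ref{lem:obs_inequ_single_pipe} on the doubled pipe $[0,2\ell^e]$ (with $\ell^e$ replaced by $2\ell^e$): the concatenation of the given solution on $[0,\ell^e]$ with its extension on $[\ell^e,2\ell^e]$ is a Lipschitz solution of \eqref{eq:system_diff_fric} across $x=\ell^e$, since both pieces solve the same equation and their traces match at the interface by \eqref{eq:BC_extension}. The lemma then yields $\int_{\ell^e}^{2\ell^e}(|\delta_+^e|^2+|\delta_-^e|^2)(\tau,x)\,dx\le \int_0^{2\ell^e}(|\delta_+^e|^2+|\delta_-^e|^2)(\tau,x)\,dx\le C\int_{\tau-2\ell^e/c}^{\tau+2\ell^e/c}(|\delta_+^e|^2+|\delta_-^e|^2)(\rho,0)\,d\rho$ for each relevant $\tau$. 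Inserting $\tau=\tau_\pm$, and for the space--time integrals integrating this slice estimate in $\tau$ and using Fubini, every resulting $\rho$-window is contained in $[t-\tfrac52\tfrac{\ell^e}{c},t+\tfrac52\tfrac{\ell^e}{c}]$; the hypothesis $t>\tfrac52\tfrac{\lmax}{c}$ guarantees all these times are positive, so the solution and its extension are defined there. Collecting the constants gives \eqref{eq:inv_obs_inequality}.

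I expect the source term to be the main obstacle: since $\lambda>0$ the invariants are not constant along characteristics, so each conversion carries a $\lambda$-integral of $\delta_+^e-\delta_-^e$ along the characteristic. Controlling these is precisely the step where one either repeats the Gronwall estimate for the quantity $\H$ from the proof of Lemma~\ref{lem:obs_inequ_single_pipe}, or --- as sketched above --- bounds them crudely by space--time integrals of $|\delta_\pm^e|^2$ over the imaginary pipe and feeds those back into the doubled-pipe observability inequality. A secondary technical point is to check that the extension of Remark~\ref{rem:existence_extension} (taken with $n=0$ and $\bar t=\tau_\pm$) indeed covers the triangular domain of determinacy of the doubled pipe, so that Lemma~\ref{lem:obs_inequ_single_pipe} is applicable; this inclusion is straightforward because $\Omega_{\bar t,0}$ is wider than the required domain at every $x\in[\ell^e,2\ell^e]$.
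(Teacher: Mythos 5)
Your proposal is correct in outline and rests on the same two pillars as the paper's own proof: the extension of the solution to the imaginary pipe $[\ell^e,2\ell^e]$ from Remark~\ref{rem:existence_extension}, and the single-pipe observability inequality of Lemma~\ref{lem:obs_inequ_single_pipe} applied on the doubled pipe $[0,2\ell^e]$ (including the same implicit check that the concatenated solution is a Lipschitz solution across the interface $x=\ell^e$). Where you genuinely differ is in the intermediate bookkeeping. The paper traces $\delta_+^e$ backward and $\delta_-^e$ forward in time into the \emph{real} pipe, so its spatial slices live on $[0,\ell^e]$ at the two times $t\mp\tfrac12\tfrac{\ell^e}{c}$, and it controls the leftover $\lambda$-integrals by introducing the cone functionals $\G_\pm^e$ and proving $\partial_{\tilde s}\G_+^e\le 0$ and $\partial_{\tilde s}\G_-^e\ge-4\lambda\,\G_-^e$ (Gronwall); as a result, Lemma~\ref{lem:obs_inequ_single_pipe} is invoked at only two slice times. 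You instead trace both families into the \emph{imaginary} pipe, so your slices live on $[\ell^e,2\ell^e]$ at $\tau_\pm=t\pm\tfrac12\tfrac{\ell^e}{c}$, and you control the leftover source terms by applying the doubled-pipe observability estimate at \emph{every} slice time $\tilde s$ in the window $[t-\tfrac12\tfrac{\ell^e}{c},\,t+\tfrac12\tfrac{\ell^e}{c}]$ and integrating via Fubini. This is legitimate: the hypothesis $t>\tfrac52\tfrac{\lmax}{c}$ gives exactly $\tilde s>\tfrac{2\ell^e}{c}$ for every such $\tilde s$, and all resulting windows at $x=0$ lie inside $[t-\tfrac52\tfrac{\ell^e}{c},\,t+\tfrac52\tfrac{\ell^e}{c}]$, so your route trades the monotonicity/Gronwall computation for a one-line integration, at the cost of slightly cruder constants. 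One detail needs correction: for the leftover space--time terms you need the extension on the union of all determinacy triangles, $\bigcup_{\tilde s}\Omega_{\tilde s,0}=\Omega_{t,1}$, not merely on $\Omega_{\tau_\pm,0}$; so your parenthetical choice ``$n=0$ and $\bar t=\tau_\pm$'' does not cover the slices at intermediate times $\tilde s$. Taking $n=1$ and $\bar t=t$ (or patching the slice-wise extensions together by uniqueness) fixes this, and Remark~\ref{rem:existence_extension} explicitly provides this case; with that repair your argument goes through and, like the paper's, yields a constant $C_2$ that stays bounded as $\lambda\rightarrow 0$.
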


\begin{proof}
	The main strategy of the proof is to estimate the time integral on the left hand side of \eqref{eq:inv_obs_inequality} by a space integral and then apply the observability inequality for a single pipe, see Lemma~\ref{lem:obs_inequ_single_pipe}.
	
	Let $e\in\E$. We start by noting that \eqref{eq:system_diff_fric} implies
	\begin{align*}
		&|\delta^e_+|^2(s,\ell^e)-|\delta^e_+|^2(t-\tfrac{1}{2} 	\tfrac{\ell^e}{c},\tfrac{1}{2}\ell^e-c(s-t))
		=\int_{t-\tfrac{1}{2} \tfrac{\ell^e}{c}}^{s} \tfrac{d}{d \tilde s}
		\left(|\delta^e_+|^2(\tilde s, \ell^e+c(\tilde s - s)) \right)  d\tilde s\\
		&= \int_{t-\tfrac{1}{2} \tfrac{\ell^e}{c}}^{s}
		(-2\lambda)  \delta_+^e(\delta_+^e-\delta_-^e)(\tilde s, \ell^e+c(\tilde s - s)) d\tilde s\\
		&\le \tfrac{\lambda}{2} \int_{t-\tfrac{1}{2} \tfrac{\ell^e}{c}}^{s}
		(|\delta_+^e|^2+|\delta_-^e|^2)(\tilde s, \ell^e+c(\tilde s - s)) d\tilde s.
	\end{align*}
	Integrating in time yields 
	\begin{align*}
		&\int_{t-\tfrac{1}{2}\tfrac{\ell^e}{c}}^{t+\tfrac{1}{2}\tfrac{\ell^e}{c}}
		|\delta^e_+(s,\ell^e) |^2 ds
		\le \int_{t-\tfrac{1}{2}\tfrac{\ell^e}{c}}^{t+\tfrac{1}{2}\tfrac{\ell^e}{c}} |\delta^e_+(t-\tfrac{1}{2} \tfrac{\ell^e}{c},\tfrac{1}{2}\ell^e-c(s-t))|^2 ds\\
		&\quad + \tfrac{\lambda}{2}
		\int_{t-\tfrac{1}{2}\tfrac{\ell^e}{c}}^{t+\tfrac{1}{2}\tfrac{\ell^e}{c}} \int_{t-\tfrac{1}{2} \tfrac{\ell^e}{c}}^{s}
		\left(|\delta_+^e|^2+|\delta_-^e|^2\right)(\tilde s, \ell^e+c(\tilde s - s)) d\tilde s\, ds\\
		& = \tfrac{1}{c} \int_0^{\ell^e} |\delta^e_+(t-\tfrac{1}{2} \tfrac{\ell^e}{c},x)|^2 dx\,
		+ \tfrac{\lambda}{2} 
		\int_{t-\tfrac{1}{2}\tfrac{\ell^e}{c}}^{t+\tfrac{1}{2}\tfrac{\ell^e}{c}} \int_{\tilde s}^{t+\tfrac{1}{2} \tfrac{\ell^e}{c}}
			\left(|\delta_+^e|^2+|\delta_-^e|^2\right)(\tilde s, \ell^e+c(\tilde s - s)) ds\, d\tilde s\\
		&= \tfrac{1}{c} \int_0^{\ell^e} |\delta^e_+(t-\tfrac{1}{2} \tfrac{\ell^e}{c},x)|^2 dx\,
		+ \tfrac{\lambda}{2 c} 
		\int_{t-\tfrac{1}{2}\tfrac{\ell^e}{c}}^{t+\tfrac{1}{2}\tfrac{\ell^e}{c}} \int_{\tfrac{1}{2}\ell^e +c(\tilde s-t)}^{\ell^e}
		\left(|\delta_+^e|^2+|\delta_-^e|^2\right)(\tilde s, x) dx\, d\tilde s.
	\end{align*}
	Now, we denote the inner integral of the second term by 
	\begin{align*}
		\tilde\G_+^e(\tilde s):=\int_{\tfrac{1}{2}\ell^e +c(\tilde s-t)}^{\ell^e}
		\left(|\delta_+^e|^2+|\delta_-^e|^2\right)(\tilde s, x) dx.
	\end{align*}
	This is an integral of $|\delta_+^e|^2+|\delta_-^e|^2$ along a line from $\tfrac{1}{2}\ell^e +c(\tilde s-t)$ to $\ell^e$, see Figure~\ref{fig:def_G+}.
	In order to estimate the time derivative of $\tilde\G_+^e$, we extend the integral to an integral along the line from $\tfrac{1}{2}\ell^e +c(\tilde s-t)$ to $\tfrac{3}{2}\ell^e -c(\tilde s-t)$, i.e., 
	\begin{align*}
		\G_+^e(\tilde s):=
		\int_{\tfrac{1}{2}\ell^e +c(\tilde s-t)}^{\tfrac{3}{2}\ell^e -c(\tilde s-t)}
		\left(|\delta_+^e|^2+|\delta_-^e|^2\right)(\tilde s, x) dx
	\end{align*}
	with $\tilde\G_+^e(\tilde s)\le \G_+^e(\tilde s)$ 
	for $\tilde s\in[t-\tfrac{1}{2}\tfrac{\ell^e}{c}, t+\tfrac{1}{2}\tfrac{\ell^e}{c}]$.
	For the definition of $\G_+^e$ we have to extend the solution $\delta_\pm^e$ on $[0,\ell^e]$ to a solution on the `imaginary' pipe $[\ell^e,2 \ell^e]$. 
	We do this by extending the values of $\delta_\pm^e(\tilde s, \ell^e)$, $\tilde s \in[t-\tfrac{3}{2}\tfrac{\ell^e}{c}, t+\tfrac{1}{2}\tfrac{\ell^e}{c}]$, along the characteristic curves to a solution in the area
	\begin{align} \label{eq:extension_area}
		\tilde s \in[t-\tfrac{3}{2}\tfrac{\ell^e}{c}, t+\tfrac{1}{2}\tfrac{\ell^e}{c}], \quad
		\ell^e\le x \le \min\{ \tfrac{3}{2}\ell^e -c(\tilde s-t), \tfrac{5}{2}\ell^e +c(\tilde s-t)  \},
	\end{align}
	see Figure~\ref{fig:G+_sol_extension} as well as Remark~\ref{rem:existence_extension} and Figure~\ref{fig:existence_sol_extension}.

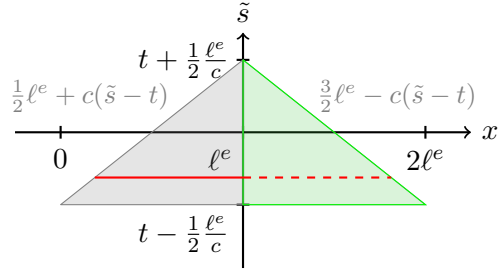
\begin{figure}[h]
	\centering
	
	\begin{tikzpicture}[scale=1.2]
		
		\draw [->,thick] (2,-1.5) to (2,1.1) node (taxis) [above] {$\tilde s$};
		\draw [->,thick] (-0.5,0) to (4.5,0) node (taxis) [right] {$x$};
		
		\foreach \x/\xtext in {0/0, 4/2\ell^e}
		\draw[thick,shift={(\x,0)}] (0pt,2pt) -- (0pt,-2pt) node[below] {$\xtext$};
		
		\draw[thick,shift={(2,0)}] (0pt,2pt) -- (0pt,-2pt) node[below left] {$\ell^e$};
		
		\draw[thick,shift={(2,-0.8)}] (-2pt, 0pt) -- (2pt,0pt) node[below left] {$ t-\tfrac{1}{2}\tfrac{\ell^e}{c}\ $};

		\draw[thick,shift={(2,0.8)}] (-2pt, 0pt) -- (2pt,0pt) node[left] {$ t+\tfrac{1}{2}\tfrac{\ell^e}{c}\ $};

		\draw[gray, fill, fill opacity=0.2] (0,-0.8) coordinate (A) -- (2,0.8) coordinate (B)
		-- (2,-0.8) coordinate (C) -- (A);
		
		\draw[gray, fill, fill opacity=0.1] (B) -- (4,-0.8) coordinate (D)
		-- (C) -- (B);
		
		\draw[green, fill, fill opacity=0.1.5]  (B) --  (C) -- (D) -- (B);
		
		\draw[thick, red] (3/8, -0.5)--(2,-0.5);
		\draw[thick, dashed, red] (2,-0.5) -- (3+5/8, -0.5);
		
		\draw[gray] (1.3,0.4) node[left] {{\small $\tfrac{1}{2}\ell^e +c(\tilde s-t)$}};
		\draw[gray] (2.7,0.4) node[right] {{\small $\tfrac{3}{2}\ell^e -c(\tilde s-t)$}};
		
	\end{tikzpicture}
	
	\caption{Illustration of the definition of $\tilde \G_+(\cdot) $ (integral along the red solid line) and of $\G_+(\cdot) $ (integral along the red solid and dashed line).
	}
	\label{fig:def_G+}
\end{figure}

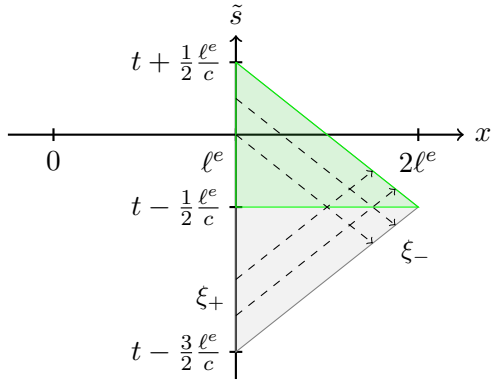
\begin{figure}[ht]
	\centering
	
	\begin{tikzpicture}[scale=1.2]
			
		\draw [->,thick] (2,-2.7) to (2,1.1) node (taxis) [above] {$\tilde s$};
		\draw [->,thick] (-0.5,0) to (4.5,0) node (taxis) [right] {$x$};
		
		\foreach \x/\xtext in {0/0, 4/2\ell^e}
		\draw[thick,shift={(\x,0)}] (0pt,2pt) -- (0pt,-2pt) node[below] {$\xtext$};
		
		\draw[thick,shift={(2,0)}] (0pt,2pt) -- (0pt,-2pt) node[below left] {$\ell^e$};
		
		\draw[thick,shift={(2,-0.8)}] (-2pt, 0pt) -- (2pt,0pt) node[left] {$ t-\tfrac{1}{2}\tfrac{\ell^e}{c}\ $};
		
		\draw[thick,shift={(2,-2.4)}] (-2pt, 0pt) -- (2pt,0pt) node[left] {$ t-\tfrac{3}{2}\tfrac{\ell^e}{c}\ $};
		
		\draw[thick,shift={(2,0.8)}] (-2pt, 0pt) -- (2pt,0pt) node[left] {$ t+\tfrac{1}{2}\tfrac{\ell^e}{c}\ $};

		\draw[gray, fill, fill opacity=0.1] (2,0.8) coordinate (A) -- (4,-0.8) coordinate (B)
		-- (2,-2.4) coordinate (C) -- (A);
		
		\draw[green, fill, fill opacity=0.1.5]  (A) --  (B)
		-- (2,-0.8) coordinate (D) -- (A);
		
		\draw[dashed, ->] (2,0.4) -- (3.75, -1) ;
		\draw[dashed, ->] (2,0) -- (3.5, -1.2) ;
		\draw (3.7, -1.3) node[right] {{\small $\xi_-$}};
		
		\draw[dashed, ->] (2,-2) -- (3.75, -0.6) ;
		\draw[dashed, ->] (2,-1.6) -- (3.5, -0.4) ;
		\draw (2, -1.8) node[left] {{\small $\xi_+$}};
				
	\end{tikzpicture}
	
	\caption{Extension of the solution $\delta_\pm^e$ to a solution on the imaginary pipe $[\ell^e, 2\ell^e]$ that is defined for $(\tilde s, x)$ satisfying \eqref{eq:extension_area}, i.e. for $(\tilde s, x)$ in the shaded area. The characteristics $\xi_\pm$ are drawn as dashed lines.
		The green region is the same as in Figure~\ref{fig:def_G+}.
	}
	\label{fig:G+_sol_extension}
\end{figure}

	Then, the time derivative of $\G_+^e(\tilde s)$ can be estimated by
	\begin{align*}
		\tfrac{\partial \G_+^e(\tilde s)}{\partial \tilde s}
		=& -c \left(|\delta_+^e|^2+|\delta_-^e|^2\right)(\tilde s, \tfrac{3}{2}\ell^e -c(\tilde s-t))
			-c \left(|\delta_+^e|^2+|\delta_-^e|^2\right)(\tilde s, \tfrac{1}{2}\ell^e +c(\tilde s-t))\\
			&\ + \int_{\tfrac{1}{2}\ell^e +c(\tilde s-t)}^{\tfrac{3}{2}\ell^e -c(\tilde s-t)} \tfrac{\partial}{\partial \tilde s} 	\left(|\delta_+^e|^2+|\delta_-^e|^2\right)(\tilde s, x) dx\\
		=& -c \left(|\delta_+^e|^2+|\delta_-^e|^2\right)(\tilde s, \tfrac{3}{2}\ell^e -c(\tilde s-t))
			-c \left(|\delta_+^e|^2+|\delta_-^e|^2\right)(\tilde s, 	\tfrac{1}{2}\ell^e +c(\tilde s-t))\\
			&\ + \int_{\tfrac{1}{2}\ell^e +c(\tilde s-t)}^{\tfrac{3}{2}\ell^e 	-c(\tilde s-t)} 	\left( -c \dx |\delta_+^e|^2+c \dx  |\delta_-^e|^2 -2\lambda |\delta_+^e-\delta_-^e|^2\right)(\tilde s, x) dx\\
		=& -2c|\delta_+^e|^2(\tilde s, \tfrac{3}{2}\ell^e -c(\tilde s-t))
			-2c|\delta_-^e|^2(\tilde s, \tfrac{1}{2}\ell^e +c(\tilde s-t))\\
			&-2 \lambda \int_{\tfrac{1}{2}\ell^e +c(\tilde s-t)}^{\tfrac{3}{2}\ell^e -c(\tilde s-t)} |\delta_+^e-\delta_-^e|^2(\tilde s, x) dx.
	\end{align*}
	In particular, we have
	\begin{align*}
		\tfrac{\partial \G_+^e(\tilde s)}{\partial \tilde s}
		\le 0,
	\end{align*}
	i.e., 
	\begin{align*}
		\tilde \G_+^e(\tilde s)\le \G_+^e(\tilde s)
		\le \G_+^e(t-\tfrac{1}{2}\tfrac{\ell^e}{c})
		= \int_{0}^{2\ell^e}
		\left(|\delta_+^e|^2+|\delta_-^e|^2\right)(t-\tfrac{1}{2}\tfrac{\ell^e}{c}, x) dx
	\end{align*}
	for all $\tilde s\in[t-\tfrac{1}{2}\tfrac{\ell^e}{c}, t+\tfrac{1}{2}\tfrac{\ell^e}{c}]$.
	Note that the proof of Lemma~\ref{lem:obs_inequ_single_pipe} only uses equation \eqref{eq:system_diff_fric} as well as the existence of solutions. Therefore we can apply the observability inequality in Lemma~\ref{lem:obs_inequ_single_pipe} also to the interval $[0,2 \ell^e]$, which implies
	\begin{align*}
		\tilde \G_+^e(\tilde s)
		\le C \int_{t-\tfrac{5}{2}\tfrac{\ell^e}{c}}^{t+\tfrac{3}{2}\tfrac{\ell^e}{c}}
		\left(|\delta_+^e|^2+|\delta_-^e|^2\right)(s,0) ds.
	\end{align*}	
	Thus, we can summarize the estimate for $\delta^e_+$ as
	\begin{align*}
		\int_{t-\tfrac{1}{2}\tfrac{\ell^e}{c}}^{t+\tfrac{1}{2}\tfrac{\ell^e}{c}}
		|\delta^e_+(s,\ell^e) |^2 ds
		&\le\tfrac{1}{c} \int_0^{\ell^e} |\delta^e_+(t-\tfrac{1}{2} \tfrac{\ell^e}{c},x)|^2 dx\,
		+ \tfrac{\lambda}{2c} 
		\int_{t-\tfrac{1}{2}\tfrac{\ell^e}{c}}^{t+\tfrac{1}{2}\tfrac{\ell^e}{c}} 
		\tilde \G_+^e(\tilde s) d\tilde s\\
		&\le C \left( \tfrac{1}{c}+ \tfrac{ \lambda}{2c} \tfrac{\ell^e}{c}\right)
		\int_{t-\tfrac{5}{2}\tfrac{\ell^e}{c}}^{t+\tfrac{3}{2}\tfrac{\ell^e}{c}}
		\left(|\delta_+^e|^2+|\delta_-^e|^2\right)(s,0) ds. 
	\end{align*}
	
	It remains to find a bound for the time integral of $|\delta_-^e(s,\ell^e)|^2$. Similarly as for $\delta_+^e$ we can estimate
	\begin{multline*}
		\quad|\delta^e_-(s,\ell^e) |^2
		\le   |\delta^e_-(t+\tfrac{1}{2} \tfrac{\ell^e}{c},\tfrac{1}{2}\ell^e+c(s-t))|^2\\
		\quad + \tfrac{5}{2}\lambda 
		\int_s^{t+\tfrac{1}{2} \tfrac{\ell^e}{c}}
		\left(|\delta_+^e|^2+|\delta_-^e|^2\right)(\tilde s, \ell^e-c(\tilde s - s)) d\tilde s,\quad
	\end{multline*}
	and therefore
	\begin{multline*}
		\quad\int_{t-\tfrac{1}{2}\tfrac{\ell^e}{c}}^{t+\tfrac{1}{2}\tfrac{\ell^e}{c}}
		|\delta^e_-(s,\ell^e) |^2 ds
		\le   \tfrac{1}{c} \int_0^{\ell^e} |\delta^e_-(t+\tfrac{1}{2} \tfrac{\ell^e}{c},x)|^2 dx \\
		+ \tfrac{5}{2} \tfrac{\lambda}{c} 
		\int_{t-\tfrac{1}{2}\tfrac{\ell^e}{c}}^{t+\tfrac{1}{2}\tfrac{\ell^e}{c}} \int_{\tfrac{1}{2}\ell^e -c(\tilde s-t)}^{\ell^e}
		\left(|\delta_+^e|^2+|\delta_-^e|^2\right)(\tilde s, x) dx\, d\tilde s.\quad
	\end{multline*}
	Again, we extend the solution to $[0,2\ell^e]$ and define
	\begin{align*}
		\tilde \G_-^e(\tilde s) := \int_{\tfrac{1}{2}\ell^e -c(\tilde s-t)}^{\ell^e}
		\left(|\delta_+^e|^2+|\delta_-^e|^2\right)(\tilde s, x) dx
		\le  \int_{\tfrac{1}{2}\ell^e -c(\tilde s-t)}^{\tfrac{3}{2}\ell^e +c(\tilde s-t)}
		\left(|\delta_+^e|^2+|\delta_-^e|^2\right)(\tilde s, x) dx
		=:\G_-^e(\tilde s) 
	\end{align*}
	for $\tilde s\in[t-\tfrac{1}{2}\tfrac{\ell^e}{c}, t+\tfrac{1}{2}\tfrac{\ell^e}{c}]$.
	Estimating the time derivative of $\G_-^e(\tilde s)$ yields
	\begin{align*}
		\tfrac{\partial \G_-^e(\tilde s)}{\partial \tilde s}
		=&\, c \left(|\delta_+^e|^2+|\delta_-^e|^2\right)(\tilde s, \tfrac{3}{2}\ell^e +c(\tilde s-t))
		+c \left(|\delta_+^e|^2+|\delta_-^e|^2\right)(\tilde s, 	\tfrac{1}{2}\ell^e -c(\tilde s-t))\\
		&\ + \int_{\tfrac{1}{2}\ell^e -c(\tilde s-t)}^{\tfrac{3}{2}\ell^e 	+c(\tilde s-t)} 	\left( -c \dx |\delta_+^e|^2+c \dx  |\delta_-^e|^2 -2\lambda |\delta_+^e-\delta_-^e|^2\right)(\tilde s, x) dx\\
		=& \,2c\, |\delta_-^e|^2(\tilde s, \tfrac{3}{2}\ell^e +c(\tilde s-t))
		+2c\, |\delta_+^e|^2(\tilde s, \tfrac{1}{2}\ell^e -c(\tilde s-t))\\
		&-2 \lambda \int_{\tfrac{1}{2}\ell^e +c(\tilde s-t)}^{\tfrac{3}{2}\ell^e -c(\tilde s-t)} |\delta_+^e-\delta_-^e|^2(\tilde s, x) dx\\
		\ge &\, -4\lambda \G_-^e(\tilde s).
	\end{align*}
	By applying a Gronwall lemma we see
	\begin{align*}
		\G_-^e(\tilde s) \le \exp(4\lambda \tfrac{\ell^e}{c}) \, \G_-^e(t+\tfrac{1}{2}\tfrac{\ell^e}{c}) 
		= \exp(4\lambda \tfrac{\ell^e}{c}) \,
		\int_{0}^{2\ell^e}
		\left(|\delta_+^e|^2+|\delta_-^e|^2\right)(t+\tfrac{1}{2}\tfrac{\ell^e}{c}, x) dx,
	\end{align*}
	where the right hand side can be estimated by the observability inequality Lemma~\ref{lem:obs_inequ_single_pipe}.

	In summary, we have shown
	\begin{align*}
		&\int_{t-\tfrac{1}{2}\tfrac{\ell^e}{c}}^{t+\tfrac{1}{2}\tfrac{\ell^e}{c}}
		\left( |\delta^e_+(s,\ell^e) |^2 + |\delta^e_-(s,\ell^e) |^2\right) ds\\
		&\le \tfrac{1}{c} \int_0^{\ell^e} \left( |\delta^e_+(t-\tfrac{1}{2} \tfrac{\ell^e}{c},x)|^2 + |\delta^e_-(t+\tfrac{1}{2} \tfrac{\ell^e}{c},x)|^2\right) dx\,
		+ \tfrac{5}{2}\tfrac{\lambda}{c}
		\int_{t-\tfrac{1}{2}\tfrac{\ell^e}{c}}^{t+\tfrac{1}{2}\tfrac{\ell^e}{c}} \left(\tilde\G_+^e(\tilde s)+\tilde\G_-^e(\tilde s)\right) d\tilde s\\
		&\le C_2 
		\int_{t-\tfrac{5}{2}\tfrac{\ell^e}{c}}^{t+\tfrac{5}{2}\tfrac{\ell^e}{c}}
		\left(|\delta_+^e(s,0)|^2+|\delta_-^e(s,0)|^2\right) ds 
	\end{align*}
	with $C_2=2C \left(\tfrac{1}{c}+ \tfrac{5}{2}\tfrac{\lambda}{c}\tfrac{\ell^e}{c} \exp(4\lambda \tfrac{\ell^e}{c}) \right) $, where C is the constant from Lemma~\ref{lem:obs_inequ_single_pipe}.
\end{proof}

\begin{remark}
	Again, the constant in \eqref{eq:inv_obs_inequality} is bounded in the limit $\lambda\rightarrow 0$. More precisely, we have $C_2\rightarrow 2$ for $\lambda\rightarrow 0$.
\end{remark}

Now we can show a $L^2$-observability inequality for tree-shaped networks. This will be the main ingredient in the proof of exponential synchronization.

\begin{thm}[$L^2$-observability inequality]  \label{thm:ObsInequ_L2_fric}
	Let $G=(\V, \E)$ be a tree-shaped network with $N$ inner nodes.
	Define the sequence $(b_n)_{n\in\N}$ by $b_n:=2n-1$.
	Then there exists a constant $C_3>0$ such that for $T\ge b_N \tfrac{\lmax}{c}$ and $t>T$ we have
	\begin{align} \label{eq:obsInequ_L2_fric}
		\|(\delta_+, \delta_-)(t,\cdot)\|^2_{L^2(\E)}
		\le C_3 \sum_{\nu\in\V_\partial}  \|(\delta_+, \delta_-)(\cdot, \nu)\|^2_{L^2([t-T, t+T])},
	\end{align}
	where
	\begin{align*}
		\|(\delta_+, \delta_-)(t,\cdot)\|^2_{L^2(\E)}
		:=\|\delta_+(t,\cdot)\|^2_{L^2(\E)} + \|\delta_-(t,\cdot)\|^2_{L^2(\E)}.
	\end{align*}
\end{thm}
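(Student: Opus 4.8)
The plan is to run the same graph-theoretic induction over the number $N$ of inner nodes as in the proof of Lemma~\ref{lem:ObsInequ_L2}, but to replace the ``constant along characteristics'' property, which fails once $\lambda>0$, by the two single-pipe estimates in Lemma~\ref{lem:obs_inequ_single_pipe} and Lemma~\ref{lem:reversed_obs_inequ}. The sequence $b_n=2n-1$ is exactly what the reduction forces: each reduction step will enlarge the admissible observation window by $2\tfrac{\lmax}{c}$ on each side, which is consistent with $b_n-b_{n-1}=2$, while the base value $b_1=1$ matches the window of the single-pipe inequality. Throughout I write $T:=b_N\tfrac{\lmax}{c}$ and $T_1:=b_{N-1}\tfrac{\lmax}{c}$, so that $T-T_1=2\tfrac{\lmax}{c}$.

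For the base case $N=1$ the network is star-shaped. I parametrize each edge $i$ so that $x=0$ sits at its boundary node $\nu_{\partial_i}$ and apply Lemma~\ref{lem:obs_inequ_single_pipe} on each pipe; its window $[t-\tfrac{\ell^i}{c},t+\tfrac{\ell^i}{c}]$ is contained in $[t-\tfrac{\lmax}{c},t+\tfrac{\lmax}{c}]=[t-T,t+T]$, and $t>T\ge\tfrac{\ell^i}{c}$ makes the lemma applicable. Since $\delta\out(s,\nu_{\partial_i})=0$ by the boundary condition \eqref{eq:diff_BC_fric}, the right-hand side is exactly the boundary norm $\sum_{\nu\in\V_\partial}\|(\delta_+,\delta_-)(\cdot,\nu)\|^2_{L^2([t-T,t+T])}$ after summing over $i$.

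For the induction step I assume \eqref{eq:obsInequ_L2_fric} for all trees with at most $N-1$ inner nodes and window $T_1$. Using the graph-theoretic fact established in the proof of Lemma~\ref{lem:ObsInequ_L2}, I select a first-order boundary node $\nu_1$ with boundary edges $1,\dots,m$ and a single inner edge $m+1$, and I form $G_1$ by deleting edges $1,\dots,m$, so that $G_1$ has $N-1$ inner nodes and $\nu_1\in\V_\partial(G_1)$. The induction hypothesis bounds $\|(\delta_+,\delta_-)(t,\cdot)\|^2_{L^2(\E(G_1))}$ by $C_1$ times the $G_1$-boundary norm over $[t-T_1,t+T_1]$. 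Each $\nu\in\V_\partial(G_1)\cap\V_\partial(G)$ contributes a term directly controlled by the larger window because $T_1\le T$, so the only genuinely new term is the one at $\nu_1$, namely $\int_{t-T_1}^{t+T_1}\bigl(|\delta^{m+1}\out(s,\nu_1)|^2+|\delta^{m+1}\In(s,\nu_1)|^2\bigr)\,ds$. Its integrand I bound pointwise by the purely algebraic coupling estimate \eqref{eq:proofL2-obs-ineq_1}, $|\delta^{m+1}\out(s,\nu_1)|^2+|\delta^{m+1}\In(s,\nu_1)|^2\le(\tfrac{3}{2}M^2+8(M-1))\sum_{i=1}^m(|\delta^i\In(s,\nu_1)|^2+|\delta^i\out(s,\nu_1)|^2)$, which is independent of $\lambda$. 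The heart of the step is then to control, for each boundary edge $i$ parametrized with $x=0$ at $\nu_{\partial_i}$ and $x=\ell^i$ at $\nu_1$, the quantity $\int_{t-T_1}^{t+T_1}\bigl(|\delta^i_+(s,\ell^i)|^2+|\delta^i_-(s,\ell^i)|^2\bigr)\,ds$ by the boundary values at $\nu_{\partial_i}$. I cover $[t-T_1,t+T_1]$ by finitely many, boundedly overlapping subintervals of half-width $\tfrac{1}{2}\tfrac{\ell^i}{c}$ with centers $t_k$ tiling the window, and I apply Lemma~\ref{lem:reversed_obs_inequ} at each $t_k$ (using, as remarked there, that the inequality is insensitive to the orientation of the edge, so it may be used with $x=0$ at $\nu_{\partial_i}$ regardless of how $i$ is oriented in $G$). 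For the leftmost center $t_k=t-T_1+\tfrac{1}{2}\tfrac{\ell^i}{c}$ the admissibility condition $t_k>\tfrac{5}{2}\tfrac{\ell^i}{c}$ holds because $t>T$ gives $t-T_1>2\tfrac{\lmax}{c}\ge2\tfrac{\ell^i}{c}$. Summing, the right-hand windows $[t_k-\tfrac{5}{2}\tfrac{\ell^i}{c},t_k+\tfrac{5}{2}\tfrac{\ell^i}{c}]$ lie in $[t-T_1-2\tfrac{\ell^i}{c},t+T_1+2\tfrac{\ell^i}{c}]\subseteq[t-T,t+T]$, and the bounded overlap absorbs into a constant, yielding a bound by $\int_{t-T}^{t+T}(|\delta^i_+(s,0)|^2+|\delta^i_-(s,0)|^2)\,ds$; since $\delta^i\out(s,\nu_{\partial_i})=0$, this is precisely the $\nu_{\partial_i}$-boundary norm.

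Finally I account for the $L^2$-mass on the deleted pipes: for each $i=1,\dots,m$, Lemma~\ref{lem:obs_inequ_single_pipe} bounds $\int_0^{\ell^i}(|\delta^i_+(t,x)|^2+|\delta^i_-(t,x)|^2)\,dx$ by a constant times the $\nu_{\partial_i}$-boundary norm over $[t-\tfrac{\ell^i}{c},t+\tfrac{\ell^i}{c}]\subseteq[t-T,t+T]$. Adding this to the $G_1$-estimate via $\|(\delta_+,\delta_-)(t,\cdot)\|^2_{L^2(\E(G))}=\|(\delta_+,\delta_-)(t,\cdot)\|^2_{L^2(\E(G_1))}+\sum_{i=1}^m\|(\delta^i_+,\delta^i_-)(t,\cdot)\|^2_{L^2(0,\ell^i)}$ and collecting all factors into a single $C_3$ (depending on the topology, $c$, $\lambda$ and the pipe lengths) gives \eqref{eq:obsInequ_L2_fric}. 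I expect the main obstacle to be exactly this window bookkeeping: one must verify that the extreme reversed-inequality applications stay admissible and that every enlarged right-hand window still fits inside $[t-T,t+T]$, and it is this accounting that pins down $b_n=2n-1$ and forces $t>T$. A secondary technical point is to guarantee the extension to the imaginary pipe from Remark~\ref{rem:existence_extension} at every shifted center $t_k$, which again reduces to the same lower bound $t_k>\tfrac{5}{2}\tfrac{\ell^i}{c}$.
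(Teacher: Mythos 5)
Your proposal follows essentially the same route as the paper's proof: induction over the number of inner nodes using the first-order boundary node reduction from Lemma~\ref{lem:ObsInequ_L2}, the algebraic coupling estimate \eqref{eq:proofL2-obs-ineq_1} at $\nu_1$, Lemma~\ref{lem:reversed_obs_inequ} to transfer the $L^2$-norm at $\nu_1$ to the boundary nodes $\nu_{\partial_i}$ (which is exactly what produces $b_n-b_{n-1}=2$), and Lemma~\ref{lem:obs_inequ_single_pipe} for the deleted pipes and the star-shaped base case. In fact, your explicit tiling of $[t-T_1,t+T_1]$ by windows of half-width $\tfrac{1}{2}\tfrac{\ell^i}{c}$, with the admissibility check $t_k>\tfrac{5}{2}\tfrac{\ell^i}{c}$, spells out a covering detail that the paper's proof passes over when it writes ``Lemma~\ref{lem:reversed_obs_inequ} implies'' the estimate over the longer interval.
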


\begin{proof}
	We use the same notation as in the proof of Lemma~\ref{lem:ObsInequ_L2}. Again, we use an induction argument over the number of inner nodes of the network.
	Assume that the observability inequality \eqref{eq:obsInequ_L2_fric} holds for all tree-shaped networks with at most $N-1$ inner nodes (we assume here $N\ge2$, otherwise we can directly apply the base case $N=1$, see the end of this proof).
	By the proof of Lemma~\ref{lem:ObsInequ_L2} there exists an inner node $\nu_1$ of $G$ that is a boundary node of first order of $G$ and that has exactly one incident inner edge, cf. Fig. \ref{fig:network2_tree-shaped_reduction}.
	As in the proof of Lemma~\ref{lem:ObsInequ_L2}, we consider the subnetwork $G_1$ of $G$ that is constructed by removing all incident boundary edges of $\nu_1$.
	Then, $G_1$ is a tree-shaped network with $N-1$ inner nodes and by assumption there exists a constant $C_1>0$ such that
	\begin{align} \label{eq:ObsInequ_L2_G1_fric}
		\|(\delta_+, \delta_-)(t,\cdot)\|^2_{L^2(\E(G_1))}
		\le C_1 \sum_{\nu\in\V_\partial(G_1)}  \|(\delta_+, \delta_-)(\cdot, \nu)\|^2_{L^2([t-b_{N-1} \tfrac{\lmax}{c},\, t+b_{N-1} \tfrac{\lmax}{c}])}.
	\end{align}
	Our goal is to induce \eqref{eq:obsInequ_L2_fric} from \eqref{eq:ObsInequ_L2_G1_fric}.
	Since $\V_\partial(G_1)\subset\{\nu_1\}\cup \V_\partial(G)$, as a first step we have to estimate 
	$ \|(\delta_+^e, \delta_-^e)(\cdot, \nu_1)\|^2_{L^2([t-b_{N-1} \tfrac{\lmax}{c}, \, t+b_{N-1} \tfrac{\lmax}{c}])} $ by the right hand side of \eqref{eq:obsInequ_L2_fric}, where $e$ is the inner edge that is incident to $\nu_1$.
	Now, we consider the coupling conditions at the node $\nu_1$. Analogously to the proof of Lemma~\ref{lem:ObsInequ_L2} we can show 
	\begin{align*} 
		|\delta^{m+1}\out(s,\nu_1)|^2 + |\delta^{m+1}\In(s,\nu_1)|^2
		\le \left(\tfrac{3}{2} M^2 +8 (M-1)\right) \sum_{i=1}^{m} \left(|\delta^i\In(s,\nu_1)|^2 +|\delta^i\out(s,\nu_1)|^2 \right),
	\end{align*}
	where we denoted the edges as in Fig.~\ref{fig:network_coupling_condition} and $M$ denotes the the maximal number of edges that can be incident to a node in $G$.
	
	The main difference to the proof of Lemma~\ref{lem:ObsInequ_L2} is that the value of $\delta_\pm$ is no longer constant along the edges of the network.
	Let $\nu_{\partial_j}$ denote the boundary node that is incident to the edge~$j$, $j\in\{1,\ldots,m\}$, in $G$. Then,
	in order to estimate $\|(\delta_+^i, \delta_-^i)(\cdot, \nu_1)\|^2_{L^2([t-b_{N-1} \tfrac{\lmax}{c},\, t+b_{N-1} \tfrac{\lmax}{c}])}$,  
	we have to estimate the $L^2$-norm of $\delta^i_\pm(\cdot,\nu_1)$ by the $L^2$-norm of
	$\delta^i_\pm(\cdot,\nu_{\partial_i})$.
	For this, we use that Lemma~\ref{lem:reversed_obs_inequ} implies
	\begin{align*}
		&\int_{t-b_{N-1} \tfrac{\lmax}{c}}^{t+b_{N-1} \tfrac{\lmax}{c}}
		\left(|\delta^i_+(s,\nu_1)|^2 +|\delta^i_-(s,\nu_1)|^2 \right)ds\\
		&\le C \int_{t-b_{N-1} \tfrac{\lmax}{c}-2\tfrac{\ell^i}{c}}^{t+b_{N-1} \tfrac{\lmax}{c}+2\tfrac{\ell^i}{c}}
		\left(|\delta^i_+(s,\nu_{\partial_i})|^2 +|\delta^i_-(s,\nu_{\partial_i})|^2 \right)ds\\
		&\le C \int_{t-b_N\tfrac{\lmax}{c}}^{t+b_N\tfrac{\lmax}{c}}
		\left(|\delta^i_+(s,\nu_{\partial_i})|^2 +|\delta^i_-(s,\nu_{\partial_i})|^2 \right)ds
	\end{align*}
	for $i\in\{1,\ldots,m\}$.
	In summary, this implies
	\begin{align*}
		&\|(\delta_+^{m+1}, \delta_-^{m+1})(\cdot, \nu_1)\|^2_{L^2([t-b_{N-1} \tfrac{\lmax}{c}, \, t+b_{N-1} \tfrac{\lmax}{c}])}\\
		&\le \left(\tfrac{3}{2} M^2 +8 (M-1)\right) \int_{t-b_{N-1} \tfrac{\lmax}{c}}^{t+b_{N-1}\tfrac{\lmax}{c}} \sum_{i=1}^{m} \left(|\delta^i\In(s,\nu_1)|^2 +|\delta^i\out(s,\nu_1)|^2 \right)ds\\
		&\le C \left(\tfrac{3}{2} M^2 +8 (M-1)\right) \int_{t-b_{N} \tfrac{\lmax}{c}}^{t+b_{N}\tfrac{\lmax}{c}} \sum_{i=1}^{m} \left(|\delta^i\In(s,\nu_{\partial_i})|^2 +|\delta^i\out(s,\nu_{\partial_i})|^2 \right)ds\\
		&\le \tilde C \sum_{\nu\in\V_\partial(G)} \|(\delta_+, \delta_-)(\cdot, \nu)\|_{L^2([t-b_N\tfrac{\lmax}{c}, t+b_N\tfrac{\lmax}{c}])}^2.
	\end{align*}
	This concludes the first step of the proof of \eqref{eq:obsInequ_L2_fric} using \eqref{eq:ObsInequ_L2_G1_fric}.
	
	As a second step, we estimate $\|(\delta_+^j, \delta_-^j)(t,\cdot)\|^2_{L^2(0,\ell^j)}$ for all removed boundary edges\linebreak ${j=1,\ldots,m}$. Using the observability inequality for a single pipe, see Lemma~\ref{lem:obs_inequ_single_pipe}, we see
	\begin{align*}
		\int_0^{\ell^j} \left(|\delta^j_+(t,x)|^2+|\delta^j_-(t,x)|^2\right) dx
		&\le C \int_{t-\tfrac{\ell^j}{c}}^{t+\tfrac{\ell^j}{c}} \left(|\delta^j_+(s,\nu_{\partial_j})|^2+|\delta^j_-(s,\nu_{\partial_j})|^2\right) ds\\
		&\le  C \|(\delta_+^j, \delta_-^j)(\cdot,\nu_{\partial_j})\|^2_{L^2([t-\tfrac{\lmax}{c},\, t+\tfrac{\lmax}{c}])}
	\end{align*}
	for $j\in\{1,\ldots,m\}$.
	
	Together, we have shown that the induction hypothesis \eqref{eq:ObsInequ_L2_G1_fric} implies
	\begin{align*}
		\|(\delta_+, \delta_-)(t,\cdot)\|^2_{L^2(\E(G))}
		&\le \sum_{i=1}^{m} \|(\delta_+^i, \delta_-^i)(t,\cdot)\|^2_{L^2(0,\ell^j)}
		+\|(\delta_+, \delta_-)(t,\cdot)\|^2_{L^2(\E(G_1))}\\
		&\le C \sum_{\nu\in\V_\partial(G)}  \|(\delta_+, \delta_-)(\cdot, \nu)\|^2_{L^2([t-b_N \tfrac{\lmax}{c}, \, t+b_N \tfrac{\lmax}{c}])},
	\end{align*}
	i.e., we have shown that \eqref{eq:ObsInequ_L2_G1_fric} implies \eqref{eq:obsInequ_L2_fric}.
	
	It remains to show the base case of the induction. This means that we have to show the assertion for the network $G_{N-1}$, which is a star-shaped network, cf. Fig.~\ref{fig:star-shaped_network}.
	Denote the inner node by $\tilde \nu$, the edges by $e=1,\ldots, \tilde m$ and the boundary nodes by $\nu_{\partial_j}$, $j=1,\ldots, \tilde m$.
	Again, using the observability inequality for a single pipe, see Lemma~\ref{lem:obs_inequ_single_pipe}, we can show
	\begin{align*}
		&\|(\delta_+, \delta_-)(t,\cdot)\|^2_{L^2(\E(G_{N-1}))}
		=\sum_{i=1}^{\tilde m} \int_0^{\ell^i} \left(|\delta^i_+(t,x)|^2+|\delta^i_-(t,x)|^2\right) dx\\
		&\le  C \sum_{i=1}^{\tilde m} \|(\delta_+^i, \delta_-^i)(\cdot,\nu_{\partial_i})\|^2_{L^2([t-\tfrac{\lmax}{c},\, t+\tfrac{\lmax}{c}])}\\
		&\le  C \sum_{\nu\in\V_\partial(G_{N-1})}  \|(\delta_+, \delta_-)(\cdot, \nu)\|^2_{L^2([t- \tfrac{\lmax}{c}, \, t+\tfrac{\lmax}{c}])}.
	\end{align*}
	Together, this shows the assertion of the theorem.
\end{proof}

\begin{remark} \label{rem:limit_C3}
	Tracking the constants in the proof of Theorem~\ref{thm:ObsInequ_L2_fric} shows that, 
	for $\lambda\rightarrow 0$, the constant $C_3$ of the $L^2$-observability inequality for tree-shaped networks converges to a positive and bounded constant that depends on $c$, the number $N$ of inner nodes of the network and on the maximal number $M$ of edges that are incident to any node.
\end{remark}

Using the $L^2$-observability inequality stated in Theorem \ref{thm:ObsInequ_L2_fric}, we can show that the solution $\delta_\pm$ of the difference system \eqref{eq:system_diff_fric}--\eqref{eq:diff_CC_fric} converges exponentially to zero in the long time limit for tree-shaped networks,
i.e., the state of the observer system converges exponentially towards the original solution.

\begin{thm}  \label{thm:expSynchr_L2_fric}
	Let $G=(\V, \E)$ be a tree-shaped network and let $\delta_\pm$ be a solution of the system \eqref{eq:system_diff_fric}--\eqref{eq:diff_CC_fric}. Then there exist constants $\mu>0$, $C_1>0$ such that 
	\begin{align*}
		\|(\delta_+, \delta_-)(t,\cdot)\|^2_{L^2(\E)}
		\le C_1 \exp(-\mu t) \quad \forall t>0.
	\end{align*}
\end{thm}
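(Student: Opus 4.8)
The plan is to follow the strategy of the proof of Theorem~\ref{thm:expSynchr_L2}, replacing the friction-free observability inequality by its counterpart with friction, Theorem~\ref{thm:ObsInequ_L2_fric}. First I would fix $t>0$, multiply the two equations in \eqref{eq:system_diff_fric} by $\delta_+^e$ and $\delta_-^e$ respectively, and integrate over $[t-\tilde t, t+\tilde t]\times[0,\ell^e]$ for some $\tilde t\in(0,t)$. Adding the resulting identities and using the regularity $\delta_\pm^e\in C^0([0,T];W^{1,\infty}(0,\ell^e))\cap W^{1,\infty}(0,T;L^\infty(0,\ell^e))$ guaranteed by Remark~\ref{rem:function_spaces}, the space--time divergence structure yields, after summation over all edges, an identity of the form
\begin{align*}
\L(t+\tilde t)-\L(t-\tilde t)
= c\int_{t-\tilde t}^{t+\tilde t}\sum_{\nu\in\V}\sum_{e\in\E(\nu)}\big(|\delta\out^e(s,\nu)|^2-|\delta\In^e(s,\nu)|^2\big)\,ds
- 2\lambda\sum_{e\in\E}\int_{t-\tilde t}^{t+\tilde t}\int_0^{\ell^e}|\delta_+^e-\delta_-^e|^2\,dx\,ds,
\end{align*}
where $\L(t):=\|(\delta_+,\delta_-)(t,\cdot)\|^2_{L^2(\E)}$. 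The crucial observation is that the friction contribution $-2\lambda(\delta_+-\delta_-)^2$ enters with a favourable (non-positive) sign, so it may simply be discarded.

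The next step is identical to the friction-free case: by \cite[Lemma 2]{Gugat2021} the flux $\sum_{e\in\E(\nu)}(|\delta\out^e|^2-|\delta\In^e|^2)$ vanishes at every inner node (here $\mu^\nu=1$), while the boundary condition \eqref{eq:diff_BC_fric} forces $\delta\out^e(s,\nu)=0$ at boundary nodes. Dropping the friction term and retaining only the surviving boundary contributions gives
\begin{align*}
\L(t+\tilde t)-\L(t-\tilde t)\le -c\sum_{\nu\in\V_\partial}\|(\delta_+,\delta_-)(\cdot,\nu)\|^2_{L^2([t-\tilde t,t+\tilde t])}\le 0,
\end{align*}
so that $\L$ is monotonically decreasing. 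Choosing now $\tilde t\ge T:=b_N\tfrac{\lmax}{c}$ and $t>T$ and inserting the observability inequality \eqref{eq:obsInequ_L2_fric} of Theorem~\ref{thm:ObsInequ_L2_fric} produces $\L(t+T)-\L(t-T)\le -\tfrac{c}{C_3}\L(t)$. Using monotonicity of $\L$ to bound $\L(t)$ from below by $\L(t+T)$ then yields $\L(t+T)\le\tilde C\,\L(t-T)$ with $\tilde C:=(1+\tfrac{c}{C_3})^{-1}<1$.

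Finally I would feed this contraction-over-a-window estimate into the modified Gronwall lemma \cite[Lemma 1]{Gugat2010}, exactly as at the end of the proof of Theorem~\ref{thm:expSynchr_L2}: writing an arbitrary $s>0$ as $s=\tilde s+2jT$ with $j\in\N$, $\tilde s\in[0,2T)$, iterating gives $\L(s)\le\tilde C^{\,j}\L(0)$, and setting $\mu:=-\tfrac{\ln\tilde C}{2T}>0$, $C_1:=\exp(2T\mu)\L(0)$ delivers $\L(s)\le C_1\exp(-\mu s)$. The only genuine differences from Theorem~\ref{thm:expSynchr_L2} are the appearance of the friction term in the energy balance---harmless because of its sign---and the use of the longer observation window $T=b_N\tfrac{\lmax}{c}$ coming from Theorem~\ref{thm:ObsInequ_L2_fric} rather than $T=N\tfrac{\lmax}{c}$. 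I expect the main (though minor) obstacle to be the careful justification of the integration by parts at the level of regularity provided by Remark~\ref{rem:function_spaces}, together with confirming that discarding the friction term leaves an inequality strong enough to run the Gronwall argument; neither step introduces a new idea beyond the friction-free proof.
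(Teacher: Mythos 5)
Your proposal is correct and takes essentially the same approach as the paper: the energy identity with the friction term discarded thanks to its non-positive sign, the node-wise cancellation via \cite[Lemma 2]{Gugat2021} together with the zero boundary data, the observability inequality of Theorem~\ref{thm:ObsInequ_L2_fric} with window $T=b_N\tfrac{\lmax}{c}$ yielding the contraction $\L(t+T)\le\tilde C\,\L(t-T)$, and the modified Gronwall argument of \cite[Lemma 1]{Gugat2010}. The steps you spell out after the energy inequality are exactly what the paper compresses into ``the rest of the proof is analogous to the proof of Theorem~\ref{thm:expSynchr_L2}.''
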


\begin{proof}
	Let some $t>0$ be given. As in the proof of Theorem~\ref{thm:expSynchr_L2} and \cite[Theorem~4]{Gugat2021}, we multiply \eqref{eq:system_diff_fric} by $\delta_\pm^e$ and integrate over $[t-\tilde t, t+\tilde t]\times[0,\ell^e]$ for some $\tilde t\in(0,t)$.
	This yields
	\begin{align*}
		\int_{t-\tilde t}^{t+\tilde t} \int_{0}^{\ell^e} \delta_\pm^e(s,x) \dt \delta_\pm^e(s,x)\, dx\, ds
		=&\,(\mp c) \int_{t-\tilde t}^{t+\tilde t} \int_{0}^{\ell^e} \delta_\pm^e(s,x) \dx \delta_\pm^e(s,x)\, dx\, ds\\
		&\mp \lambda \int_{t-\tilde t}^{t+\tilde t} \int_{0}^{\ell^e} \delta_\pm^e(s,x) \left(\delta_+^e(s,x)-\delta_-^e(s,x)  \right)\, dx\, ds ,
	\end{align*}
	i.e.,
	\begin{align*}
		\tfrac{1}{2}  \int_{0}^{\ell^e} \left[  | \delta_\pm^e(s,x)|^2 \right]_{s=t-\tilde t}^{t+\tilde t} \, dx
		=&\, (\mp \tfrac{c}{2})  \int_{t-\tilde t}^{t+\tilde t} \left[  | \delta_\pm^e(s,x)|^2 \right]_{x=0}^{\ell^e} \, ds\\
		& \mp \lambda \int_{t-\tilde t}^{t+\tilde t} \int_{0}^{\ell^e} \delta_\pm^e(s,x) \left(\delta_+^e(s,x)-\delta_-^e(s,x)  \right)\, dx\, ds.
	\end{align*}
	Summing up over all pipes $e\in\E$, we get the following inequality for the $L^2$-norm
	\begin{align*}
		&\|(\delta_+, \delta_-)(t+\tilde t,\cdot)\|^2_{L^2(\E)} - \|(\delta_+, \delta_-)(t-\tilde t,\cdot)\|^2_{L^2(\E)}\\
		&= (-c) \sum_{e\in\E} \int_{t-\tilde t}^{t+\tilde t} \left[  | \delta_+^e(s,x)|^2  -| \delta_-^e(s,x)|^2 \right]_{x=0}^{\ell^e} \, ds
		-2\lambda  \int_{t-\tilde t}^{t+\tilde t} \int_{0}^{\ell^e}  \left|\delta_+^e(s,x)-\delta_-^e(s,x)  \right|^2\, dx\, ds \\
		&\le c  \int_{t-\tilde t}^{t+\tilde t} \sum_{\nu\in\V} \sum_{e\in\E(\nu)}   \big( |\delta\out^e(s,\nu)|^2  -| \delta\In^e(s,\nu)|^2 \big)\, ds.
	\end{align*}
	The rest of the proof is analogously to the proof of Theorem~\ref{thm:expSynchr_L2}.
\end{proof}

\begin{remark}
		Theorem ~\ref{thm:expSynchr_L2_fric} shows that $\|(\delta_+, \delta_-)(t,\cdot)\|^2_{L^2(\E)}$ converges exponentially to zero with parameter $\mu :=-\tfrac{\ln(\tilde C)}{2T}$, where $\tilde C :=\tfrac{1}{1+\tfrac{c}{C_3}}$. Using Remark~\ref{rem:limit_C3} we can see that $\mu$ converges to a small, but positive constant $\mu_0>0$ for $\lambda\rightarrow 0$,	
		i.e., the convergence is uniform in the $\lambda\rightarrow 0$ limit.
		This means that the exponential synchronization is guaranteed by the boundary condition~\eqref{eq:diff_BC_fric} that accounts for the boundary measurements and does not build on a possible decay due to the friction term.	
\end{remark}

\begin{remark}
	We can extend the exponential convergence stated in Theorem~\ref{thm:expSynchr_L2_fric} to a semilinear system with more general friction law, 
	i.e., to the system
	\begin{align}
		&\begin{pmatrix}
			\dt R_+^e \\ \dt R_-^e
		\end{pmatrix}
		+\begin{pmatrix}
			c & 0\\ 0 &-c
		\end{pmatrix}
		\begin{pmatrix}
			\dx R_+^e \\ \dx R_-^e
		\end{pmatrix}  
		= \begin{pmatrix}
			-\sigma( R_+^e,R_-^e)\\ \sigma( R_+^e,R_-^e)
		\end{pmatrix}, 
		&& e\in\E, \label{eq:system_fric_semilin}
	\end{align}
	with $\sigma( R_+,R_-)=\tilde{\sigma}(R_+-R_-)$, where $\tilde{\sigma}(0)=0$, $\tilde{\sigma}$ is Lipschitz-continuous with Lipschitz-constant $L_\sigma$ and $\tilde{\sigma}'\ge 0$
	on  the convex hull of the union of the images of $(R_+-R_-)$ and $(S_+-S_-)$.
	For
	\begin{align*}
		\sigma( R_+,R_-)=\gamma |R_+-R_-| (R_+-R_-)
	\end{align*}
	with friction parameter $\gamma\ge0$,
	an existence result for \eqref{eq:system_fric_semilin} is provided by \cite[Theorem 1]{Gugat2021} and an $L^2$-observability inequality for a single pipe is given in \cite[Theorem 2]{Gugat2021}.

	The difference system \eqref{eq:system_diff_fric} corresponding to \eqref{eq:system_fric_semilin} is given by
	\begin{align}
		&\begin{pmatrix}
			\dt \delta_+^e \\ \dt \delta_-^e
		\end{pmatrix}
		+\begin{pmatrix}
			c & 0\\ 0 &-c
		\end{pmatrix}
		\begin{pmatrix}
			\dx \delta_+^e \\ \dx \delta_-^e
		\end{pmatrix}  
		= \begin{pmatrix}
			-\left(\sigma( R_+^e,R_-^e)-\sigma( S_+^e,S_-^e)\right)\\ \sigma( R_+^e,R_-^e)-\sigma( S_+^e,S_-^e)
		\end{pmatrix}, 
		&& e\in\E. \label{eq:system_diff_fric_semilin}
	\end{align}
	The main step to extend the exponential convergence to the semilinear system is to replace the estimation at the beginning of the proof of Lemma~\ref{lem:reversed_obs_inequ} by 
	\begin{align*}
		&|\delta^e_+|^2(s,\ell^e)-|\delta^e_+|^2(t-\tfrac{1}{2} 	\tfrac{\ell^e}{c},\tfrac{1}{2}\ell^e-c(s-t))
		=\int_{t-\tfrac{1}{2} \tfrac{\ell^e}{c}}^{s} \tfrac{d}{d \tilde s}
		\left(|\delta^e_+|^2(\tilde s, \ell^e+c(\tilde s - s)) \right)  d\tilde s\\
		&= \int_{t-\tfrac{1}{2} \tfrac{\ell^e}{c}}^{s}
		(-2) \delta_+^e  \left(\sigma( R_+^e,R_-^e)-\sigma( S_+^e,S_-^e)\right)(\tilde s, \ell^e+c(\tilde s - s)) d\tilde s\\
		&\le L_\sigma
		\int_{t-\tfrac{1}{2} \tfrac{\ell^e}{c}}^{s}
		(|\delta_+^e|^2+|\delta_-^e|^2)(\tilde s, \ell^e+c(\tilde s - s)) d\tilde s,
	\end{align*}
	where we 
	have used the mean value theorem for $\tilde\sigma(\cdot)$ as well as Young's inequality in the last step. 
	The estimation of $\G_+$ can be handled similarly. Therefore the proof of Lemma~\ref{lem:reversed_obs_inequ} can be extended to the semilinear case.
	Note that the proof of Theorem~\ref{thm:ObsInequ_L2_fric} does not depend on the explicit form of the friction term.
	The estimation at the beginning of the proof of Theorem~\ref{thm:expSynchr_L2_fric} can again be adapted using the mean value theorem for $\tilde \sigma(\cdot)$ and the rest of the proof of Theorem~\ref{thm:expSynchr_L2_fric} is independent of the explicit form of the friction term.
	Therefore the exponential convergence can be extended to the semilinear system.
\end{remark}

\section*{Acknowledgement}
The authors are grateful for financial support by the German Science Foundation (DFG) via grant TRR~154 (\emph{Mathematical modelling, simulation and optimization using the example of gas networks}), sub-project C05 (Project
239904186). The work of J.G. is also supported by the Graduate School CE within Computational Engineering at Technische Universität Darmstadt. 


\end{document}